%19May 2023
%29 Nov 2022
 
\documentclass[12pt,leqno]{amsart}
\usepackage{latexsym,amssymb,amsmath,multicol, rotating,lscape}
\usepackage{letltxmacro}

\usepackage{tikz}
\usetikzlibrary{calc,decorations.markings}

\textheight=24cm
\textwidth=16.5cm
\hoffset= -1.5cm
\topmargin=-0.5cm
 2

\newtheorem{thm}{Theorem}[section]
\newtheorem{lem}[thm]{Lemma}

\newtheorem{cor}[thm]{Corollary}
\newcommand{\thmref}[1]{Theorem~\ref{#1}}
\newcommand{\lemref}[1]{Lemma~\ref{#1}}

\theoremstyle{remark}
\newtheorem{rmk}{Remark}[section]

\begin{document}

\title[$\ell$-fold product $L$-function]
{On the coefficients of $\ell$-fold product $L$-function}

\author[Ayyadurai Sankaranarayanan]{Ayyadurai Sankaranarayanan \textsuperscript{(1)}}
\author[Lalit Vaishya]{Lalit Vaishya \textsuperscript{(2)}}

\address[1]{School of Mathematics and Statistics, University of Hyderabad, CR Rao Road, Hyderabad, Telangana
500046, India}
\email{sank@uohyd.ac.in}

\address[2]{The Institute of Mathematical Sciences (A CI of Homi Bhabha
National Institute)
CIT Campus, Taramani,
Chennai - 600 113
Tamil Nadu 
India}
\email{lalitvaishya@gmail.com, lalitv@imsc.res.in}

\subjclass[2010]{Primary 11F30, 11F11, 11M06; Secondary 11N37}
\keywords{ Fourier coefficients of cusp form, $\ell$-fold product $L$-function, Symmetric power $L$-functions,  Asymptotic behaviour}

\date{\today}
 
\maketitle

\begin{abstract}
Let	 $f \in S_{k}(SL_2(\mathbb{Z}))$ be a normalized Hecke eigenforms of integral weight $k$ for the full modular group. In the article, we study the average behaviour of Fourier coefficients of $\ell$-fold product $L$-function. More precisely, we establish the asymptotics of power moments associated to the sequence  $\{\lambda_{f \otimes f \otimes \cdots \otimes_{\ell} f}(n)\}_{n- {\rm squarefree}}$ where ${f \otimes f \otimes \cdots \otimes_{\ell} f}$ denotes the $\ell$-fold product of $f$.  As a consequence, we prove results concerning the behaviour of sign changes  associated to these sequences for odd $\ell$-fold product $L$-function. A similar result also holds for the sequence $\{\lambda_{f \otimes f \otimes \cdots \otimes_{\ell} f}(n)\}_{n \in \mathbb{N}}$.  
\end{abstract}

\section{Introduction}

Let $S_{k}(SL_2(\mathbb{Z}))$ denote the ${\mathbb C}$-vector space of cusp forms of weight $k$ for the full modular group $SL_2(\mathbb{Z})$. A cusp form $f \in S_{k}(SL_2(\mathbb{Z}))$ is said to be a Hecke eigenform if $f$ is a simultaneous eigenfunction for all the Hecke operators. Let  $a_{f}(n)$ denote the  $n^{\rm th}$ Fourier coefficient of a cusp form $f \in S_{k}(SL_2(\mathbb{Z}))$. A cusp form $f$ is said to be normalised if $a_f(1)=1$. We define the normalised $n^{\rm th}$ Fourier coefficients $ \lambda_f(n)$ given by; $ \lambda_f(n) := {a_{f}(n)}/{n^{\frac{k-1}{2}}}$.  The normalised Fourier coefficient $\lambda_{f}(n)$ is a multiplicative function and satisfies the following recursive relation \cite[Eq. (6.83)]{Iwaniec}:
\begin{equation}
\lambda_{f}(m)\lambda_{f}(n) = \sum_{d \vert m,n }  \lambda_{f}\left(\frac{mn}{d^2}\right),  %\qquad  {\rm \ for \  all  \ positive \ integers }\ m \ {\rm  and } \ n.
\end{equation}
 for all positive integers $m$ and $n.$ Ramanujan conjecture predicts that $|\lambda_{f}(p)| \le 2$.  It has been established in a pioneer work of  Deligne. More precisely, it has been proved that 
\begin{equation} \label{lambda-coefficient-bound}
|\lambda_{f}(n)| \le d(n) \ll_{\epsilon} n^{\epsilon}, % \qquad  { ~~~~\rm  for \ any ~~~~} \epsilon > 0,
\end{equation}
for  any arbitrary small  $\epsilon > 0,$ where $d(n)$ denotes the number of positive divisors of $n$. 

\smallskip
Let  $f(\tau) = \displaystyle{\sum_{n=1}^\infty \lambda_f(n)n^{\frac{k-1}{2}}q^{n}} \in S_{k}(SL_2(\mathbb{Z}))$ be a normalised Hecke eigenform. We define the Hecke $L$-function   given by (for $ \Re(s) >1$)
\begin{equation}
\begin{split}
L(s, f) &= \sum_{n \ge 1} \frac{\lambda_{f}(n)}{n^{s}} =  \prod_{p} \left(1-\frac{\lambda_f(p)}{p^s}-\frac{1}{p^{2s}}\right)^{-1}  = \prod_{p} \left(1-\frac{\alpha_{f}(p)}{p^s}\right)^{-1}\left(1-\frac{\beta_{f}(p)}{p^{s}}\right)^{-1} \!\!\!\!\!,
\end{split}
\end{equation}
where,  for any prime $p$, there exist complex numbers $\alpha_{f}(p)$ and $\beta_{f}(p)$ such that   
 \begin{equation}\label{PropertyCoefficients}
\begin{split}
\alpha_{f}(p)+\beta_{f}(p)=\lambda_f(p) \quad {\rm  and} \quad   |\alpha_{f}(p)|= |\beta_{f}(p)| = \alpha_{f}(p)\beta_{f}(p)=1.
\end{split}
\end{equation}
The Hecke $L$-function $L(s, f)$ satisfies a nice functional equation and it has analytic continuation to whole $\mathbb{C}$-plane \cite[Section 7.2]{Iwaniec}.

Following the work of  Garret and Harris \cite{Garret-Harris}, associated to Hecke eigenforms $f_{1}, f_{2}, \cdots, f_{\ell}$ of weight $k$ for the full modular group $SL_2(\mathbb{Z})$, we define the $\ell$-fold product  $L$-function $L(s, f_{1} \otimes f_{2} \otimes \cdots \otimes f_{\ell})$ given by (for $\Re(s)>1$)
\begin{equation}
\begin{split}
L(s, f_{1} \otimes f_{2} \otimes \cdots \otimes f_{\ell}) &:=  \sum_{n \ge 1} \frac{\lambda_{f_{1} \otimes f_{2} \otimes \cdots \otimes f_{\ell}}(n)}{n^{s}} \\
&=  \prod_{p-{\rm prime}} \prod_{\sigma}\left(1-{\alpha_{f_{1}}^{\sigma(1)}(p)} {\alpha_{f_{2}}^{\sigma(2)}(p)} \cdots {\alpha_{f_{\ell}}^{\sigma(\ell)}(p)} {p^{-s}}\right)^{-1} 
\end{split}
\end{equation}
 where $\sigma$ runs over the set of maps from  $\{1, 2, \cdots \ell\}$ to $\{1, 2\}$ and ${\alpha_{f_{j}}^{\sigma(i)}(p)} = {\alpha_{f_{j}}(p)}$ if $\sigma(i) =1$ and ${\alpha_{f_{j}}^{\sigma(i)}(p)} = {\beta_{f_{j}}(p)}$ if $\sigma(i) =2$ satisfying \eqref{PropertyCoefficients} corresponding to $f_{j}$.

For each $j$, let $f_{j} =f \in S_{k}(SL_2(\mathbb{Z}))$. Then, we consider the following $\ell$-fold product $L$-function associated to $f$ given by   
\begin{equation}\label{R-L-functionDef}
\begin{split}
L(s, f \otimes f \otimes \cdots \otimes_{\ell} f) &:=  \sum_{n \ge 1} \frac{\lambda_{f \otimes f \otimes \cdots \otimes_{\ell} f}(n)}{n^{s}} \\
&=  \prod_{p-{\rm prime}} \prod_{\sigma}\left(1-{\alpha_{f}^{\sigma(1)}(p)} {\alpha_{f}^{\sigma(2)}(p)} \cdots {\alpha_{f}^{\sigma(\ell)}(p)} {p^{-s}}\right)^{-1} 
\end{split}
\end{equation}
where $\sigma$ runs over the set of maps from  $\{1, 2, \cdots \ell\}$ to $\{1, 2\}$.
At a prime $p$, it is easy to observe that  the Fourier coefficient of $\ell$-fold product $L$-function is the $\ell^{\rm th}$-power of $\lambda_{f}(p)$, i.e.,
\begin{equation}\label{SymLf}
\begin{split}
\lambda_{f \otimes f \otimes \cdots \otimes_{\ell} f}(p) = \lambda_{f}^{\ell}(p).
\end{split}
\end{equation}

A classical problem in analytic number theory is to study the average behaviour and  distribution of arithmetical functions. One of the interesting object in consideration is the arithmetical functions associated to Fourier coefficients of automorphic forms.  For a given sequence $\{x_{n}\}$ in $\mathbb{N}$ and an arithmetical function $n \mapsto A(n)$,  one of the intriguing problem is to study the power moments associated to the sequence $\{A(x_{n})\}_{n \in \mathbb{N}}$. In this regard, several interesting results for $A(n)= \lambda_{f}(n)$  and $\lambda_{\pi}(n)$, where $f$ is a $GL(2)$-form and $\pi$ is an automorphic representation,  have been established by several mathematicians. Recently, in a joint work with  Venkatasubbareddy, the first author \cite{KVAS} established an estimate for the following  arithmetical functions $n \mapsto \lambda_{f \otimes f\otimes f}(n)$ and  $n \mapsto \lambda_{ f\otimes {\rm sym^{2}}f}(n)$, associated to   a normalized Hecke eigenform $f \in S_{k}(SL_2(\mathbb{Z}))$ and improved the bounds of his previous work with  L\"{u}  \cite{GSLAS}. 

\smallskip
In  \cite{Lalit-M},  the second author considered the same  arithmetical functions $n \mapsto \lambda_{f \otimes f\otimes f}(n)$ and  $n \mapsto \lambda_{ f\otimes {\rm sym^{2}}f}(n)$, associated to   a normalized Hecke eigenform $f$ of weight $k$ for the congruence subgroup  $\Gamma_{0}(N)$,  and investigate the oscillations  of the sequences $\{\lambda_{f \otimes f\otimes f}(n)\}$ and  $\{\lambda_{f \otimes {\rm sym^{2}}f}(n)\}$ where the indices are supported on the square-free integers represented by the primitive integral positive-definite binary quadratic forms (reduced forms) of discriminant $D$.  More precisely, the second author established certain estimate  the following sums (for $\ell=1,2$): %oscillatory behaviour % $\{\lambda_{f \otimes f\otimes f}(\mathcal{Q}(\underline{x}))\}_{\mathcal{Q} \in  \mathcal{S}_{D}, \underline{x} \in \mathbb{Z}^{2} }$ and  $\{\lambda_{f \otimes {\rm sym^{2}}f}(\mathcal{Q}(\underline{x}))\}_{\mathcal{Q} \in  \mathcal{S}_{D}, \underline{x} \in \mathbb{Z}^{2} }$. 
\begin{equation*}
\begin{split}
%S_{\ell}(f, D; X )&= 
 \sideset{}{^{\flat }}\sum_{\substack{\mathcal{Q}(\underline{x}) \le X \\  ~\mathcal{Q} \in  \mathcal{S}_{D}, \underline{x} \in \mathbb{Z}^{2} \\ } } \!\!\!\!  \!\!\!\! \lambda_{f \otimes f \otimes f}^{\ell}(\mathcal{Q}(\underline{x}))
\qquad {\rm and} \qquad
%T_{\ell}(f, D; X ) =  
\sideset{}{^{\flat }}\sum_{\substack{\mathcal{Q}(\underline{x}) \le X \\  ~\mathcal{Q} \in  \mathcal{S}_{D}, \underline{x} \in \mathbb{Z}^{2} \\} }  \!\!\!\! \!\!\!\!  \lambda_{ f \otimes {\rm sym^{2}}f}^{\ell}(\mathcal{Q}(\underline{x})). \\
% \gcd(\mathcal{Q}(\underline{x}),N) =1
\end{split}
\end{equation*}
where $\sideset{}{^{\flat }}\sum$ means that the sum is supported on square-free positive integers and $\mathcal{S}_{D}$ denotes the set of inequivalent reduced forms of fixed discriminant $D$. As  a consequence, the author proved the behaviour of sign changes of the above sequences.

In \cite{KVASankar}, the first author studied the average behaviour of Fourier coefficients associated to  tetra, penta, hexa, hepta and octa product $L$-functions and improve the previous results.

In this article, for a given Hecke eigenform $f \in S_{k}(SL_{2}(\mathbb{Z}))$,  we consider the arithmetical functions $n \mapsto \lambda_{f \otimes f \otimes \cdots \otimes_{\ell} f}(n)$ where ${f \otimes f \otimes  \cdots \otimes_{\ell} f}$ denotes the $\ell$-fold product of $f$. We  study the oscillations of the sequence $\{\lambda_{f \otimes f \otimes \cdots \otimes_{\ell} f}(n)\}$ where the sequence is supported on the set of square-free integers. More precisely, we establish certain estimate for the following sums.
\begin{equation}\label{Firstsum}
\begin{split}
S_{\ell}(f, X ) &=  \sideset{}{^{\flat }}\sum_{\substack{n \le X}}  \lambda_{f \otimes f \otimes  \cdots \otimes_{\ell} f} (n) 
\quad {\rm and } \quad
T_{\ell}(f, X ) =  \sideset{}{^{\flat }}\sum_{\substack{n \le X}}  \lambda_{f \otimes f \otimes  \cdots \otimes_{\ell} f}^{2} (n)
\end{split}
\end{equation}
where $\sideset{}{^{\flat }}\sum$ means that the sum is supported on square-free positive integers.

Throughout the paper, $\epsilon$ occurring at various places below  is not same everywhere  but a positive function of $\epsilon>0$, and for $\ell \in \mathbb{N}$ and $r \le \ell$,  ${\ell \choose r }$ denotes the binomial coefficient with the convention  ${\ell \choose r } =0$ if $r <0$ , and  $[x]$ denotes the greatest integer $ \le x$.

\bigskip
%\smallskip
Now, we state our results.
\begin{thm}\label{Approx1}
Let  $\ell \ge 3$ be a positive integer.
Let  $f \in S_{k}(SL_2(\mathbb{Z}))$ be a normalised Hecke eigenform. Then, for any $\epsilon>0,$ we have the following estimates for the sums $S_{\ell}(f, X )$ defined in \eqref{Firstsum}. For odd $\ell$, 
\begin{equation}\label{EstA}
\begin{split}
S_{\ell}(f, X ) =  \sideset{}{^{\flat }}\sum_{\substack{n \le X}}  \lambda_{f \otimes f \otimes  \cdots \otimes_{\ell} f} (n) =  O\left( X^{1- \frac{1}{\alpha_{\ell}}+ \epsilon} \right)
\end{split}
\end{equation}
where  $\alpha_{\ell} = \frac{2}{3([\frac{\ell}{2}]+2)} {\ell \choose {[\frac{\ell}{2}]}} + \frac{1}{2} \displaystyle{ \left[\sum_{n=0}^{[\ell/2]-1} {\frac{(\ell-2n+1)^{2}}{\ell-n+1} {\ell \choose {n}}} \right]} .$
For even $\ell$, 
\begin{equation}\label{EstS}
\begin{split}
S_{\ell}(f, X ) & =  \sideset{}{^{\flat }}\sum_{\substack{n \le X}}  \lambda_{f \otimes f \otimes  \cdots \otimes_{\ell} f} (n) =  X P_{\ell}(\log X) + O\left( X^{1- \frac{1}{\beta_{\ell}}+ \epsilon} \right)
\end{split}
\end{equation}
where $P_{\ell}(X)$ is  a polynomial of degree $ \frac{2}{ (\ell+2)} {\ell \choose {\frac{\ell}{2}}}-1$ with positive coefficients and 
 $$\beta_{\ell} =  \frac{1}{4}+\frac{13}{21 (\ell+2)} {\ell \choose {\frac{\ell}{2}}} +  \frac{15}{2(\ell+4)} {\ell \choose {\frac{\ell}{2}}-1} + \frac{1}{2} \displaystyle{ \left[\sum_{n=0}^{\frac{\ell}{2}-2} {\frac{(\ell-2n+1)^{2}}{\ell-n+1} {\ell \choose {n}}} \right]}.$$
\end{thm}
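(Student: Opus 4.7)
The plan is to apply Perron's formula to the Dirichlet series
\[
F(s) := \sideset{}{^{\flat}}\sum_{n\ge 1}\frac{\lambda_{f\otimes f\otimes\cdots\otimes_{\ell}f}(n)}{n^s}=\prod_{p}\!\left(1+\frac{\lambda_{f\otimes f\otimes\cdots\otimes_{\ell}f}(p)}{p^{s}}\right),
\]
and to read off the analytic structure of $F$ from the Clebsch--Gordan decomposition
\[
L(s,f\otimes f\otimes\cdots\otimes_{\ell}f)=\prod_{n=0}^{[\ell/2]}L\bigl(s,\mathrm{sym}^{\ell-2n}f\bigr)^{m_n^{(\ell)}},\qquad m_n^{(\ell)}=\binom{\ell}{n}-\binom{\ell}{n-1},
\]
obtained by iterating $\mathrm{sym}^{a}\otimes\mathrm{sym}^{b}=\bigoplus_{j=0}^{\min(a,b)}\mathrm{sym}^{a+b-2j}$, or equivalently by grouping the $2^{\ell}$ Satake products appearing in \eqref{R-L-functionDef} into the roots $\alpha_f(p)^{\ell-2n}$ of multiplicity $\binom{\ell}{n}$. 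For odd $\ell$ every exponent $\ell-2n\ge 1$ and each $L(s,\mathrm{sym}^{m}f)$ is entire, so $L(s,f^{\otimes\ell})$ is holomorphic on $\Re(s)\ge 1$; for even $\ell$ the $n=\ell/2$ term supplies $\zeta(s)^{m_{\ell/2}^{(\ell)}}$ with $m_{\ell/2}^{(\ell)}=\frac{2}{\ell+2}\binom{\ell}{\ell/2}$, producing a pole at $s=1$ of precisely the claimed order.

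A local comparison of Euler factors then gives
\[
F(s)=L(s,f\otimes f\otimes\cdots\otimes_{\ell}f)\,U(s),
\]
where $U(s)=\prod_p\!\bigl(1-\lambda_{f^{\otimes\ell}}(p^{2})p^{-2s}+O(p^{-3s})\bigr)$ is an Euler product converging absolutely on $\Re(s)>1/2$ by Deligne's bound \eqref{lambda-coefficient-bound}. Hence $F(s)$ inherits the meromorphic continuation and pole structure of $L(s,f^{\otimes\ell})$ in $\Re(s)>1/2+\epsilon$.

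Applying the truncated Perron formula at $c=1+\epsilon$,
\[
S_{\ell}(f,X)=\frac{1}{2\pi i}\int_{c-iT}^{c+iT}F(s)\,\frac{X^s}{s}\,ds+O\!\left(\frac{X^{1+\epsilon}}{T}\right),
\]
I shift the contour to $\Re(s)=1/2+\epsilon$. For even $\ell$ the pole at $s=1$ of order $\frac{2}{\ell+2}\binom{\ell}{\ell/2}$ produces the main term $XP_{\ell}(\log X)$ of degree $\frac{2}{\ell+2}\binom{\ell}{\ell/2}-1$; positivity of the coefficients follows from a direct residue computation together with the positivity of $L(1,\mathrm{sym}^{m}f)$ for all $m\ge 1$. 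For odd $\ell$ no residue is collected. The vertical integral is then estimated factor by factor on the critical line: the high-degree blocks $L(s,\mathrm{sym}^{\ell-2n}f)^{m_n^{(\ell)}}$ running over the index set of the sum defining $\alpha_{\ell}$ (resp.\ $\beta_{\ell}$) are treated by the convexity bound $|L(1/2+it,\mathrm{sym}^{m}f)|\ll(1+|t|)^{(m+1)/4+\epsilon}$, which after raising to the $m_{n}^{(\ell)}$-th power supplies the contribution $\tfrac{1}{2}\sum\frac{(\ell-2n+1)^{2}}{\ell-n+1}\binom{\ell}{n}$; the remaining low-degree blocks are handled by sharper tools, namely Good's mean value $\int_{1}^{T}|L(1/2+it,f)|^{4}\,dt\ll T^{1+\epsilon}$ for the $\mathrm{sym}^{1}f$ factor (producing the $\tfrac{2}{3([\ell/2]+2)}\binom{\ell}{[\ell/2]}$ piece of $\alpha_{\ell}$), Heath--Brown's bound $|\zeta(1/2+it)|\ll(1+|t|)^{13/84+\epsilon}$ for the $\zeta$-factor of order $m_{\ell/2}^{(\ell)}$ (producing the $\tfrac{13}{21(\ell+2)}\binom{\ell}{\ell/2}$ piece of $\beta_{\ell}$), and a known subconvex estimate for $L(s,\mathrm{sym}^{2}f)$ (producing the $\tfrac{15}{2(\ell+4)}\binom{\ell}{\ell/2-1}$ piece). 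A dyadic H\"older argument then yields $|F(1/2+\epsilon+it)|\ll(1+|t|)^{\alpha_{\ell}-1+\epsilon}$ in the odd case (resp.\ $(1+|t|)^{\beta_{\ell}-1+\epsilon}$ in the even case), and the choice $T=X^{1/\alpha_{\ell}}$ (resp.\ $X^{1/\beta_{\ell}}$) balances the Perron tail $X/T$ against the vertical estimate $X^{1/2}T^{\alpha_{\ell}-1}$ to give the stated error, the additive $\tfrac{1}{4}$ in $\beta_{\ell}$ reflecting an auxiliary estimate on the horizontal segments near $\Im(s)=\pm T$ in the even case.

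The principal obstacle is the combinatorial bookkeeping: one must verify that the interplay between convexity, Good's fourth-moment, Heath--Brown's twelfth-moment bound, and the subconvex estimate for $\mathrm{sym}^{2}f$ --- combined via H\"older's inequality on dyadic intervals --- reproduces \emph{exactly} the coefficients $\tfrac{1}{2}$, $\tfrac{2}{3([\ell/2]+2)}$, $\tfrac{13}{21(\ell+2)}$, and $\tfrac{15}{2(\ell+4)}$ defining $\alpha_{\ell}$ and $\beta_{\ell}$, and a careful residue analysis is required to pin down the positivity of the coefficients of $P_{\ell}$ in the even case.
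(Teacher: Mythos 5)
Your overall architecture --- the factorization $F(s)=L(s,f^{\otimes\ell})\,U(s)$ with $U$ absolutely convergent in $\Re(s)>1/2$, Perron's formula, a contour shift, the residue at $s=1$ in the even case, and a dyadic H\"older estimate of the vertical integral mixing convexity, subconvexity and moment bounds --- is the same as the paper's. However, the key analytic input you invoke in the odd case does not exist. You propose to work on the line $\Re(s)=\tfrac12+\epsilon$ and to use ``Good's mean value $\int_1^T|L(1/2+it,f)|^4\,dt\ll T^{1+\epsilon}$''. Good's theorems give the \emph{second} moment asymptotic of $L(s,f)$ on the critical line and the subconvexity bound $L(\tfrac12+it,f)\ll(1+|t|)^{1/3+\epsilon}$; a fourth-moment bound of size $T^{1+\epsilon}$ \emph{on the critical line} is an open problem (it amounts to a Lindel\"of-on-average statement for a degree-four object at the central point). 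What is actually available, and what the paper uses (the estimate of Ivi\'c quoted in its Lemma on the Hecke $L$-function), is $\int_0^T|L(\tfrac58+it,f)|^4\,dt\ll T^{1+\epsilon}$. This is precisely why the paper runs the odd case on the line $\sigma_0=\tfrac58$ rather than $\tfrac12+\epsilon$, and it is the source of the coefficient $\tfrac{2}{3([\ell/2]+2)}\binom{\ell}{[\ell/2]}$ and of the global rescaling $\alpha_\ell=\tfrac83\tilde\alpha_\ell$. On the critical line your scheme cannot be carried out with known results, so the stated $\alpha_\ell$ is not reproduced by your argument.

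Two further points of bookkeeping are off. First, the term $\tfrac12\sum_{n}\frac{(\ell-2n+1)^2}{\ell-n+1}\binom{\ell}{n}$ does not arise from raising the pointwise convexity bound $L(\tfrac12+it,\mathrm{sym}^mf)\ll(1+|t|)^{(m+1)/4+\epsilon}$ to the $m_n^{(\ell)}$-th power: that route yields the exponent $\tfrac14\sum(\ell-2n+1)m_n^{(\ell)}$ but, being pointwise, forfeits the factor $T^{-1}$ coming from the $t$-integration over the high-degree block, which costs you an extra additive constant in $\beta_\ell$. In the paper the coefficient $\tfrac12$ comes from the mean-square estimate $\int_T^{2T}|L(\sigma+it,F)|^2\,dt\ll T^{\deg F\,(1-\sigma)+\epsilon}$ applied to the \emph{entire} high-degree block inside a Cauchy--Schwarz, which keeps the $T^{-1}$ saving and still lands on $\tfrac12$ after the final rescaling $\beta_\ell=2\gamma_\ell$. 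Second, the additive $\tfrac14$ in $\beta_\ell$ is not a horizontal-segment artefact (the paper's smoothed Perron set-up has no horizontal segments); it is the price $\tfrac34-\tfrac58=\tfrac18$, doubled to $\tfrac14$ by $\beta_\ell=2\gamma_\ell$, of replacing one pointwise factor $|L(\tfrac12+it,\mathrm{sym}^2f)|\ll(1+|t|)^{5/8+\epsilon}$ by its mean square. (Also, the exponent $13/84$ for $\zeta$ is Bourgain's, not Heath--Brown's.) The even case of your outline is essentially the paper's argument at $\sigma_0=\tfrac12+\epsilon$ once these constants are fixed, but the odd case as described has a genuine hole.
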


\begin{rmk}
From  the multiplicativity of $\lambda_{f \otimes f \otimes  \cdots \otimes_{\ell} f} (n)$, we have the following  estimate for $ A_{\ell}(f,  X):= \displaystyle{\sum_{\substack{n \le X}}}  \lambda_{f \otimes f \otimes  \cdots \otimes_{\ell} f} (n)$.
\begin{equation*}
\begin{split}
%\sum_{\substack{n \le X}}  \lambda_{f \otimes f \otimes  \cdots \otimes_{\ell} f} (n) 
 A_{\ell}(f,  X) & = \!\!\!\! \sum_{\substack{n = QR \le X  \\ Q- \text{squarefull} \\ R- \text{squarefree} \\  \gcd(Q,R)=1}}  \lambda_{f \otimes f \otimes  \cdots \otimes_{\ell} f} (QR) = \!\!\!\! \sum_{\substack{Q \le X \\ Q- \text{squarefull}}}  \!\!\!\! \!\!\!\! \lambda_{f \otimes f \otimes  \cdots \otimes_{\ell} f} (Q) \!\!\!\! \sum_{\substack{R \le \frac{X}{Q} \\ R- \text{squarefree}}} \!\!\!\! \lambda_{f \otimes f \otimes  \cdots \otimes_{\ell} f} (R).
\end{split}
\end{equation*}
Let $M_{\ell}(X) =0$ or  $X P_{\ell}(X)$, and $\xi_{\ell} = 1-  \frac{1}{\alpha_{\ell}}+\epsilon$ or $ 1-\frac{1}{\beta_{\ell}}+\epsilon$  according as $\ell$ is odd or even for any $\epsilon>0$. It is easy to see that $\xi_{\ell} > \frac{1}{2}$ (for $\ell \ge 3$). Then, from  \thmref{Approx1} and $\lambda_{f \otimes f \otimes  \cdots \otimes_{\ell} f} (n) \ll n^{\epsilon}$, we get 
\begin{equation*}
\begin{split}
 A_{\ell}(f,  X) & = \!\!\!\! \sum_{\substack{Q \le X \\ Q- \text{squarefull}}}  \!\!\!\! \!\!\!\! \lambda_{f \otimes f \otimes  \cdots \otimes_{\ell} f} (Q) \!\!\!\! \sum_{\substack{R \le \frac{X}{Q} \\ R- \text{squarefree}}} \!\!\!\! \lambda_{f \otimes f \otimes  \cdots \otimes_{\ell} f} (R)  \\
 & = \sum_{\substack{Q \le X \\ Q- \text{squarefull}}}  \!\!\!\! \!\!\!\! \lambda_{f \otimes f \otimes  \cdots \otimes_{\ell} f} (Q) \left(M_{\ell}(X/Q) + \left(\frac{X}{Q}\right)^{\xi_{\ell}} \right)\\
 & = \sum_{\substack{Q \le X \\ Q- \text{squarefull}}}  \!\!\!\! \!\!\!\! Q^{\epsilon}  \left(M_{\ell}(X/Q) + \left(\frac{X}{Q}\right)^{\xi_{\ell}} \right).
\end{split}
\end{equation*}
We observe that $Q= q_{1}^{r_{1}} \cdots q_{j}^{r_{j}}$ with each $r_{t} \ge 2$. Let $q = q_{1} \cdots q_{j}.$ Then 
\begin{equation*}
\begin{split}
  \sum_{\substack{Q \le X \\ Q- \text{squarefull}}}  \!\!\!\!  Q^{- \xi_{\ell}+\epsilon}
 \ll \sum_{\substack{q^{2} \le X \\  \text{q-squarefree}}}   q^{- 2\xi_{\ell}+\epsilon} < \infty \quad \text{and} \quad  \sum_{\substack{Q \le X \\ Q- \text{squarefull}}}  \!\!\!\!  Q^{- 1+\epsilon} < \infty.
\end{split}
\end{equation*}
as $\xi_{\ell} > \frac{1}{2}$. Hence, we get 

$$ \sum_{n \le X}  \lambda_{f \otimes f \otimes  \cdots \otimes_{\ell} f} (n) \ll X^{1-\frac{1}{\alpha_{\ell}}+\epsilon} \quad \text{when $\ell$ is odd} $$
and
$$ \sum_{n \le X}  \lambda_{f \otimes f \otimes  \cdots \otimes_{\ell} f} (n) - \text{Main term}\ll X^{1-\frac{1}{\beta_{\ell}}+\epsilon} \quad \text{when $ \ell $ is even}$$  
with different absolute constants. Thus, we obtain
\begin{equation}\label{ErrT1}
\begin{split}
\sum_{n \le X}  \lambda_{f \otimes f \otimes  \cdots \otimes_{\ell} f} (n)  =
\begin{cases}
O\left( X^{\frac{7}{10}+ \epsilon} \right) \quad {\rm if } \quad \ell = 3, \\
O\left( X^{\frac{33}{38}+ \epsilon} \right) \quad {\rm if } \quad \ell = 5, \\
O\left( X^{\frac{161}{164}+ \epsilon} \right) \quad {\rm if } \quad \ell = 7,\\
\end{cases}
\end{split}
\end{equation}
and
\begin{equation}\label{ErrT2}
\begin{split}
\sum_{n \le X}  \lambda_{f \otimes f \otimes  \cdots \otimes_{\ell} f} (n)  = X Q_{\ell}(\log X) + 
\begin{cases}
O\left( X^{\frac{257}{299}+ \epsilon} \right) \quad {\rm if } \quad \ell = 4, \\
O\left( X^{\frac{589}{610}+ \epsilon} \right) \quad {\rm if } \quad \ell = 6, \\
O\left( X^{\frac{1411}{1423}+ \epsilon} \right) \quad {\rm if } \quad \ell = 8. \\
\end{cases}
\end{split}
\end{equation}
where $Q_{\ell}(t)$ is a polynomial of degree $1, 4$ and $13$ for $\ell =4, 6$ and $8$, respectively. It  allows us  to improve an earlier result in \cite{KVASankar}. Let 
\begin{equation*}
\begin{split}
\sum_{n \le X}  \lambda_{f \otimes f \otimes  \cdots \otimes_{\ell} f} (n)  = M_{\ell}(X) + E_{\ell}(X). 
\end{split}
\end{equation*}
Then,  comparing the earlier results of \cite{KVASankar},  we notice that  we get a better  error term, i.e.,
\begin{equation*}
\begin{split}
E_{5}(X) &\ll  X^{\frac{33}{38}+ \epsilon} \ll  X^{\frac{40}{43}+ \epsilon}, \quad E_{7}(X) \ll X^{\frac{161}{164}+ \epsilon} \ll  X^{\frac{184}{187}+ \epsilon} \quad  {\rm and},   \\
E_{4}(X) &\ll X^{\frac{257}{299}+ \epsilon} \ll   X^{\frac{79}{91}+ \epsilon}, \quad E_{6}(X) \ll  X^{\frac{589}{610}+ \epsilon} \ll  X^{\frac{88}{91}+ \epsilon} \quad  {\rm and} \quad \\
 E_{8}(X) &\ll  X^{\frac{1411}{1423}+ \epsilon} \ll  X^{\frac{374}{377}+ \epsilon}. \\
\end{split}
\end{equation*}
Thus, \eqref{ErrT1} and \eqref{ErrT2} improve Theorem 2.1 and Theorem 2.2 of \cite{KVASankar}. However, we note that $E_3(X) \ll X^{\frac{7}{10}+\epsilon}$ is the same bound obtained by L\"{u} and Sankaranarayanan in \cite{GSLAS}.
\end{rmk}

\begin{thm}\label{Approx2}
Let  $\ell \ge 3$ be a positive integer.
Let  $f \in S_{k}(SL_2(\mathbb{Z}))$ be a normalised Hecke eigenform. Then, for any $\epsilon>0,$ we have
\begin{equation}\label{EstT}
\begin{split}
T_{\ell}(f, X )  &=  \sideset{}{^{\flat }}\sum_{\substack{n \le X}}  \lambda_{f \otimes f \otimes  \cdots \otimes_{\ell} f}^{2} (n) =  X P_{2\ell}(\log X) + O\left( X^{1- \frac{1}{\beta_{2\ell}}+ \epsilon} \right)
\end{split}
\end{equation}
where $P_{\ell}(X)$ is  a polynomial of degree $ \frac{2}{ (\ell+2)} {\ell \choose {\frac{\ell}{2}}}-1$ with positive coefficients and  $\beta_{\ell}$ is as given in \thmref{Approx1}.

\end{thm}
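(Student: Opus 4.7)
The plan is to reduce Theorem~\ref{Approx2} directly to the even case of Theorem~\ref{Approx1} with $\ell$ replaced by $2\ell$. The key observation is the pointwise identity on squarefree integers
$$\lambda_{f\otimes f\otimes\cdots\otimes_{\ell} f}^{2}(n) \;=\; \lambda_{f\otimes f\otimes\cdots\otimes_{2\ell} f}(n),$$
which would immediately give $T_{\ell}(f,X)=S_{2\ell}(f,X)$. Since $2\ell\ge 6$ is always even, the second half of Theorem~\ref{Approx1} then supplies both the claimed main term $XP_{2\ell}(\log X)$, whose degree $\tfrac{2}{2\ell+2}\binom{2\ell}{\ell}-1$ matches the statement, and the error $O(X^{1-1/\beta_{2\ell}+\epsilon})$.

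To justify the displayed identity I would appeal to multiplicativity together with the prime-level evaluation \eqref{SymLf}. From the Euler product \eqref{R-L-functionDef}, the Dirichlet coefficients $\lambda_{f\otimes\cdots\otimes_{m} f}(n)$ are multiplicative in $n$ for each fixed $m$, so for squarefree $n=p_{1}\cdots p_{r}$ both sides factor as products over the prime divisors of $n$. At a single prime $p$, formula \eqref{SymLf} yields $\lambda_{f\otimes\cdots\otimes_{\ell} f}(p)=\lambda_{f}(p)^{\ell}$, so squaring gives
$$\lambda_{f\otimes\cdots\otimes_{\ell} f}(p)^{2}=\lambda_{f}(p)^{2\ell}=\lambda_{f\otimes\cdots\otimes_{2\ell} f}(p).$$
Taking the product over $p\mid n$ then establishes the identity for all squarefree $n$, after which the reduction $T_{\ell}(f,X)=S_{2\ell}(f,X)$ is immediate from the definitions in \eqref{Firstsum}.

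With this reduction in hand, the remainder of the argument is a one-line invocation of Theorem~\ref{Approx1} at the even parameter $2\ell$: this produces the polynomial $P_{2\ell}(\log X)$ with positive coefficients of the correct degree, and the error term $O(X^{1-1/\beta_{2\ell}+\epsilon})$ with the same $\beta_{\bullet}$ constant evaluated at $2\ell$, completing the proof of \eqref{EstT}. Because the genuine analytic work has all been carried out already in Theorem~\ref{Approx1}, there is no serious obstacle here; the only thing to verify carefully is that the squarefree restriction in the sum $T_{\ell}(f,X)$ is precisely what allows the pointwise identification with the $2\ell$-fold coefficients, since at prime powers $p^{a}$ with $a\ge 2$ one has $\lambda_{f\otimes\cdots\otimes_{\ell} f}(p^{a})^{2}\ne\lambda_{f\otimes\cdots\otimes_{2\ell} f}(p^{a})$ in general and the reduction would fail.
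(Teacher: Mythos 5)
Your proposal is correct and is essentially the paper's own argument: the authors also reduce $T_{\ell}$ to the even case of Theorem~\ref{Approx1} at parameter $2\ell$, via the decomposition $L_{T}(s)=L_{2\ell}(s)\,G_{\ell}(s)$ of Lemma~\ref{LDecomp1}, which rests on exactly the prime-level identity $\lambda_{f\otimes f\otimes\cdots\otimes_{\ell} f}(p)^{2}=\lambda_{f}(p)^{2\ell}=\lambda_{f\otimes f\otimes\cdots\otimes_{2\ell} f}(p)$ that you invoke. Your version simply makes the reduction at the level of the sums, as the pointwise equality $T_{\ell}(f,X)=S_{2\ell}(f,X)$ on squarefree integers, rather than at the level of the Dirichlet series, which is a slightly more direct way of saying the same thing.
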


%\begin{thm}\label{Approx2}
%	Let	 $f \in S_{k}(\Gamma_{0}(N))$ be a normalized Hecke eigenform. Then, for any $\epsilon>0,$ we have the following estimates for the sums $T_{1}(f,D; X )$ and $T_{2}(f,D; X )$ defined  in \eqref{Firstsum}:
%	\begin{equation}\label{Est4}
%	\begin{split}
%	T_{\ell}(f,D; X ) &=  X Q_{1}(\log X) + O_{\epsilon} \left( \left(N^{14} k^{14}|D|^{\frac{53}{6}}\right)^{\frac{3}{53}+\epsilon}  X^{\frac{103}{106}+\epsilon}\right), \\
%	\end{split}
%	\end{equation}
%	where  $Q_{1}(x)$ is a linear polynomial with positive coefficients.  		
%\end{thm}

As a consequence,  for an odd $\ell$,  we investigate  the  behaviour of the sign changes of the sequences $\{\lambda_{f \otimes f \otimes \cdots \otimes_{\ell} f}(n)\}_{n- {\rm squarefree}}$ and  establish the result on the number of sign changes in the short interval. Moreover, as an application, we prove that  there are infinitely many  sign changes of the above mentioned sequences.

\begin{thm}\label{N sign change}
Let  $f \in S_{k}(SL_2(\mathbb{Z}))$ be a normalised Hecke eigenform and $\ell$ be an odd positive integer. Let   $X$ be sufficiently large  real number  and $\epsilon>0$ be arbitrarily small real number. Let  $h= X^{1- \delta}$ with $  \frac{1}{\beta_{2\ell}} \le \delta < \frac{1}{\alpha_{\ell}}$. Then, the sequence $\{\lambda_{f \otimes f \otimes \cdots \otimes_{\ell} f}(n)\}_{n- {\rm squarefree}}$ has always a sign change at some square-free integer in the interval $[X, X+h]$.  Moreover, there are (at least) $X^{\delta}$ many sign changes with$  \frac{1}{\beta_{2\ell}} \le \delta < \frac{1}{\alpha_{\ell}}$, in the interval $[X,2X]$. 
\end{thm}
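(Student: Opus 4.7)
I would argue by contradiction in the standard manner (in the spirit of Meher--Murty), combining \thmref{Approx1} and \thmref{Approx2}. Fix $X$ large, write $\lambda(n):=\lambda_{f \otimes f \otimes \cdots \otimes_{\ell} f}(n)$ for brevity, and suppose that $\lambda(n)$ keeps a constant sign on the square-free integers in $[X,X+h]$. Then
\begin{equation*}
\biggl|\sideset{}{^{\flat}}\sum_{X\le n\le X+h}\lambda(n)\biggr|=\sideset{}{^{\flat}}\sum_{X\le n\le X+h}|\lambda(n)|,
\end{equation*}
and the left-hand side, rewritten as $S_{\ell}(f,X+h)-S_{\ell}(f,X)$, is bounded above by $O(X^{1-1/\alpha_{\ell}+\epsilon})$ by the odd-$\ell$ case of \thmref{Approx1}.

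To lower-bound the right-hand side, I would use Deligne's bound, which in view of \eqref{lambda-coefficient-bound} together with the Euler product in \eqref{R-L-functionDef} gives $|\lambda(n)|\ll n^{\epsilon}$, whence $|\lambda(n)|^{3}\le X^{\epsilon}\lambda^{2}(n)$ on the range in question. Cauchy--Schwarz in the form
\begin{equation*}
\Bigl(\sideset{}{^{\flat}}\sum_{X\le n\le X+h}\lambda^{2}(n)\Bigr)^{2}\le\Bigl(\sideset{}{^{\flat}}\sum_{X\le n\le X+h}|\lambda(n)|\Bigr)\Bigl(\sideset{}{^{\flat}}\sum_{X\le n\le X+h}|\lambda(n)|^{3}\Bigr)
\end{equation*}
then yields $\sideset{}{^{\flat}}\sum_{X\le n\le X+h}|\lambda(n)|\gg X^{-\epsilon}\sideset{}{^{\flat}}\sum_{X\le n\le X+h}\lambda^{2}(n)$. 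Finally, applying \thmref{Approx2} to the difference $T_{\ell}(f,X+h)-T_{\ell}(f,X)$ in the admissible range of $\delta$, the main term dominates the error and gives
\begin{equation*}
\sideset{}{^{\flat}}\sum_{X\le n\le X+h}\lambda^{2}(n)\gg h(\log X)^{d_{\ell}},\qquad d_{\ell}:=\frac{2}{2\ell+2}\binom{2\ell}{\ell}-1,
\end{equation*}
since $P_{2\ell}$ has positive leading coefficient.

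Chaining the three estimates produces $hX^{-\epsilon}(\log X)^{d_{\ell}}\ll X^{1-1/\alpha_{\ell}+\epsilon}$; substituting $h=X^{1-\delta}$ and using $\delta<1/\alpha_{\ell}$ gives a contradiction for all sufficiently large $X$, which proves the existence of a sign change at some square-free integer in $[X,X+h]$. For the moreover assertion, I would cover $[X,2X]$ by $\asymp X/h=X^{\delta}$ disjoint consecutive subintervals of length $h$ and apply the first assertion on each, yielding at least $\gg X^{\delta}$ sign changes of the sequence in $[X,2X]$.

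The main technical subtlety is the simultaneous calibration of $h$: the short window must be long enough for the main term of \thmref{Approx2} to dominate its remainder (constraining $h$ against $X^{1-1/\beta_{2\ell}+\epsilon}$), and at the same time short enough that the bound from \thmref{Approx1} is strictly beaten by $h$ (constraining $h$ against $X^{1-1/\alpha_{\ell}+\epsilon}$); these two constraints together produce the admissible range of $\delta$ in the statement. Once this balance is secured, no further analytic ingredient beyond Deligne's bound and the two preceding theorems is required.
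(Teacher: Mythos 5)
Your argument is essentially the paper's proof: both proceed by contradiction, playing the upper bound $S_{\ell}(f,X+h)-S_{\ell}(f,X)\ll X^{1-1/\alpha_{\ell}+\epsilon}$ from \thmref{Approx1} against the lower bound $\gg h$ for $T_{\ell}(f,X+h)-T_{\ell}(f,X)$ coming from the main term in \thmref{Approx2}, with Deligne's bound linking the first and second moments; your Cauchy--Schwarz step through the third moment is just a rephrasing of the paper's direct inequality $\lambda^{2}(n)\ll X^{\epsilon}\,|\lambda(n)|$ under the constant-sign hypothesis. The covering of $[X,2X]$ by $\asymp X^{\delta}$ consecutive windows of length $h$ for the second assertion is likewise identical to the paper's conclusion.
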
	

\begin{cor} Let  $\ell$ be an odd positive integer. Then,
 the sequence $\{\lambda_{f \otimes f \otimes \cdots \otimes_{\ell} f}(n)\}_{n- {\rm squarefree}}$ has infinitely many sign changes. 
\end{cor}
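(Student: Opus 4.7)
The plan is to deduce the corollary as an immediate consequence of \thmref{N sign change}, which already supplies a sign change in every short interval $[X,X+h]$ with $h = X^{1-\delta}$ and $X$ sufficiently large, for any fixed $\delta$ in the admissible range $\frac{1}{\beta_{2\ell}} \le \delta < \frac{1}{\alpha_\ell}$. The existence of this range is guaranteed by the explicit formulas for $\alpha_\ell$ and $\beta_{2\ell}$ in \thmref{Approx1}. Once such a $\delta$ is fixed, infinitely many sign changes follow by iterating the short-interval statement, or more concisely by invoking the second assertion of \thmref{N sign change} that produces at least $X^\delta \to \infty$ sign changes in $[X,2X]$.

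Concretely, I would proceed as follows. First, I would fix a value $\delta$ with $\frac{1}{\beta_{2\ell}} \le \delta < \frac{1}{\alpha_\ell}$ and let $X_0$ denote the threshold beyond which the conclusion of \thmref{N sign change} applies. Starting from any $N_1 \ge X_0$, \thmref{N sign change} produces a square-free integer $n_1 \in [N_1, N_1 + N_1^{1-\delta}]$ at which the sequence $\{\lambda_{f \otimes f \otimes \cdots \otimes_{\ell} f}(n)\}_{n-\mathrm{squarefree}}$ undergoes a sign change. Setting $N_2 = n_1 + 1$ and repeating the argument yields a further sign change at some square-free $n_2 > n_1$, and iterating this construction gives a strictly increasing sequence $n_1 < n_2 < n_3 < \cdots$ of square-free integers at which the sequence changes sign, establishing the corollary. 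Equivalently, one may simply observe that applying \thmref{N sign change} on the dyadic intervals $[2^j X_0, 2^{j+1} X_0]$ for $j = 0, 1, 2, \ldots$ yields at least $(2^j X_0)^\delta$ sign changes in each, so the total number of sign changes is infinite.

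There is no substantive obstacle beyond verifying that the admissible range for $\delta$ is non-empty, i.e.\ that $\frac{1}{\beta_{2\ell}} < \frac{1}{\alpha_\ell}$ (equivalently $\alpha_\ell < \beta_{2\ell}$) for every odd $\ell \ge 3$. This is a direct numerical check from the closed-form expressions
\[
\alpha_\ell = \frac{2}{3\bigl(\bigl[\tfrac{\ell}{2}\bigr]+2\bigr)} \binom{\ell}{[\ell/2]} + \frac{1}{2} \sum_{n=0}^{[\ell/2]-1} \frac{(\ell-2n+1)^{2}}{\ell-n+1} \binom{\ell}{n},
\]
\[
\beta_{2\ell} = \frac{1}{4} + \frac{13}{21(2\ell+2)} \binom{2\ell}{\ell} + \frac{15}{2(2\ell+4)} \binom{2\ell}{\ell-1} + \frac{1}{2} \sum_{n=0}^{\ell-2} \frac{(2\ell-2n+1)^{2}}{2\ell-n+1} \binom{2\ell}{n},
\]
which is already implicitly used in the statement of \thmref{N sign change}; the ratio grows because $\binom{2\ell}{\ell}$ dominates $\binom{\ell}{[\ell/2]}$. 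Once this is noted, the corollary follows formally from the theorem with no further analysis required.
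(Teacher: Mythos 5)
Your proposal is correct and follows exactly the route the paper intends: the corollary is an immediate consequence of Theorem~\ref{N sign change}, obtained by applying the short-interval (or dyadic-interval) sign-change statement to an infinite family of disjoint intervals above the threshold $X_0$. Your additional remark that one should check the admissible range $\frac{1}{\beta_{2\ell}} \le \delta < \frac{1}{\alpha_\ell}$ is non-empty is a sensible point of care that the paper leaves implicit, but it does not change the argument.
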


\begin{rmk}
%Exactly same result can be proved when one removes the dependency on the set of  square-free positive integers.
 The above result can also be proved for any congruence subgroup $\Gamma_{0}(N)$ of $SL_2(\mathbb{Z})$ with an error term in terms of analytic conductor also. For simplicity, we prove our result for the full modular group  $SL_2(\mathbb{Z})$. 
\end{rmk}

\smallskip
The paper is organized as follows. In the next section, we introduce key ingredients  which leads to establish our results. Finally, in last section, we prove our results.

\section{Key Ingredients}
%The sums defined in equation \eqref{Firstsum} are expressed in terms of known arithmetical functions given by
%\begin{equation}\label{Firstsum1}
%\begin{split}
%S_{\ell}(f, X ) &=     \sideset{}{^{\flat }}\sum_{\substack{n \le X } } \lambda_{f \otimes f \otimes  \cdots \otimes_{\ell} f}(n)  
%\qquad {\rm and } \qquad 
%\end{split}
%\end{equation} 
%and
%\begin{equation}\label{Secsum1}
%\begin{split}
%T_{\ell}(f, X ) =  \sideset{}{^{\flat }}\sum_{\substack{n \le X } } \lambda_{f \otimes f \otimes  \cdots \otimes_{\ell} f}^{2}(n).
%\end{split}
%\end{equation}
In order to handle these sums $S_{\ell}(f, X )$ and $T_{\ell}(f, X )$ defined in \eqref{Firstsum}, we define  the Dirichlet series
\begin{equation}\label{R-L-function1}
\begin{split}
L_{S}(s) &:=  \sideset{}{^{\flat }} \sum_{n \ge 1 } \frac{\lambda_{f \otimes f \otimes \cdots \otimes_{\ell} f}(n)}{n^{s}}
\qquad {\rm and } \qquad 
%\end{split}
%\end{equation}
%and
%\begin{equation}\label{R-L-function2}
%\begin{split}
  L_{T}(s) :=  \sideset{}{^{\flat }} \sum_{n \ge 1} \frac{\lambda_{f \otimes f \otimes \cdots \otimes_{\ell} f}^{2}(n)}{n^{s}}.\\
\end{split}
\end{equation}
The Dirichlet series for $L_{S}(s)$ and $L_{T}(s)$  converge absolutely and uniformly for $\Re(s) >1.$   To obtain an asymptotic formula with a good error term for the sums $S_{\ell}(f, X )$ and $T_{\ell
}(f, X )$ defined in \eqref{Firstsum}, we first decompose $L_{S}(s)$ and $L_{T}(s)$ in terms of known $L$-functions. Using the analytic properties of well-known $L$-functions, we achieve our goal.

For each $m \ge 2$, we define  the $m^{th}$ symmetric power $L$-function associated to a normalized Hecke eigenform $f \in S_{k}(SL_2(\mathbb{Z}))$ given by 
\begin{equation*}\label{Symf}
\begin{split}
L(s,sym^{m}f)&: = \! \prod_{p} \prod_{j=0}^{m} \left(1- \frac{{\alpha_{f}(p)}^{m-j}{\beta_{f}(p)}^{j}}{p^{-s}}\right)^{-1}  \!\!\!\!
= \zeta(ms)\sum_{n=1}^\infty \frac{\lambda_{f}(n^{m})}{n^s}
=\sum_{n=1}^\infty \frac{\lambda_{sym^{m}f}(n)}{n^s},
\end{split}
\end{equation*}
where $\alpha_{f}(p)$ and $\beta_{f}(p)$ are complex numbers satisfying  \eqref{PropertyCoefficients} and   ${\lambda_{sym^{m}f}(n)}$ is a multiplicative function. For each prime $p$, we have
\begin{equation}\label{SymPowerLf}
\begin{split}
{\lambda_{sym^{m}f}(p)} = \displaystyle{\sum_{j=0}^{m} {\alpha_{f}(p)}^{m-j}{\beta_{f}(p)}^{j}}.
\end{split}
\end{equation}
The Archimedean factors of $L(s,sym^{m}f)$ is defined as 
	\begin{equation}
	\begin{split}
	L_{\infty}(s, sym^{m}f) & = \begin{cases}
	\displaystyle{\prod_{v=0}^{p}} \Gamma_{{\mathbb C}}\left(s +\left(v+\frac{1}{2}\right)(k-1)\right) {~~~~ \rm if ~~~~}  m = 2p +1, \\
	\Gamma_{\mathbb R}(s +\delta_{2 \nmid p}) \displaystyle{\prod_{v=1}^{p}} \Gamma_{{\mathbb C}}(s +v(k-1)) {~~~~ \rm if ~~~~}  m = 2p, \\
	\end{cases}
	\end{split}
	\end{equation}
	where $\Gamma_{\mathbb R}(s) = {\pi}^{-s/2} \Gamma(s/2)$ and $\Gamma_{\mathbb C}(s) = 2 ({2\pi})^{-s} \Gamma(s)$ and, $\delta_{2 \nmid p}=1$ if  $2 \nmid p$ and $0$ if $2 \vert p$.
	%$$\delta_{2 \nmid p} =  
	%\begin{cases}
	%1 {~~~~ \rm if ~~~~}  2 \nmid p, \\
	%0 {~~~~ \rm if ~~~~}   2 \vert p. \\
	%\end{cases}
	%$$
	We define the completed $L$-function 
	$$\Lambda(s, sym^{m}f) : = L_{\infty}(s, sym^{m}f) L(s, sym^{m}f).$$
We know that 	$\Lambda(s, sym^{m}f)$  is an entire function on whole ${\mathbb C}$-plane and satisfies a nice functional equation
	$$
	\Lambda(s, sym^{m}f)  = \epsilon_{sym^{m}f} \Lambda(1-s, sym^{m}f)  
	$$
	where $\epsilon_{sym^{m}f} = \pm1$. For details, we refer to \cite[Section $3.2.1$]{Cog-Mic}. From Deligne's bound, it is well-known that 
$$
|{\lambda_{sym^{m}f}(n)}| \le d_{m+1}(n) \ll_{\epsilon} n^{\epsilon}
$$
for any real number $\epsilon >0$ and $d_{m}(n)$ denotes the number of $m$ positive factors of $n$. 
\smallskip

\begin{rmk}
	For a classical holomorphic Hecke eigenform $f$, J. Cogdell and P. Michel \cite{Cog-Mic}  have given the explicit description of analytic continuation and functional equation  for the $L$-function $L(s,sym^{m}f)$, $ m \in \mathbb{N}$. Newton and Thorne \cite{{Newton}, {NewThorne}}  established  that $sym^{m}f$ is a cusp form on $GL_{m+1}(\mathbb{A}_{\mathbb{Q}})$, for any positive integer $m$ where $\mathbb{A}_{\mathbb{Q}}$ is the ring of Adeles of $\mathbb{Q}$. So, the explicit description of analytic continuation and functional equation  for the $L$-function $L(s,sym^{m}f)$, $m\in \mathbb{N}$ is well-known.
\end{rmk}
 
\smallskip
Let  $\zeta(s)=  \displaystyle{\sum_{n \ge 1} n^{-s}}$  denote the Riemann zeta function. We assume the following conventions: \quad $L(s, sym^{0}f) = \zeta(s)$ and $L(s, sym^{1}f  ) = L(s, f)$. With these conventions, we state the decomposition of $ L_{S}(s)$ and $ L_{T}(s)$.
%\begin{equation*}
%\begin{split}
%L(s, sym^{0}f) &= \zeta(s), \qquad {\rm and } \qquad  L(s, sym^{1}f  ) = L(s, f)
%\end{split} 
%\end{equation*}

\begin{lem}\label{LDecomp}
Let  $ \ell \in \mathbb{N}.$ we have the following decomposition:
 \begin{equation} \label{DecompositionL}
\begin{split}
 L_{S}(s) & = L_{\ell}(s) \times U_{\ell}(s) \\ % \quad \quad {\rm where } \\
 \end{split}
\end{equation}
where for each odd $\ell$,
\begin{equation}\label{LOdd}
\begin{split}
L_{\ell}(s) & =   \prod_{n=0}^{[\ell/2]} \left({L(s, sym^{\ell-2n}f)}^{\left({\ell \choose n}- {\ell \choose {n-1}}\right)}   \right),       \\
\end{split}
\end{equation}
and for each even  $\ell$,
 \begin{equation}\label{LEven}
\begin{split}
L_{\ell}(s) & = \zeta(s)^{\left({\ell \choose \ell/2}- {\ell \choose {\ell/2-1}}\right)}  \prod_{n=0}^{[\ell/2]-1} \left({L(s, sym^{\ell-2n}f)}^{\left({\ell \choose n}- {\ell \choose {n-1}}\right)}   \right),       \\
\end{split}
\end{equation}
and ${\ell \choose n }$ is the binomial coefficient with the convention ${\ell \choose n } =0$ if $n<0,$ $U_{\ell}(s)$  is given in  terms of an Euler product which converges absolutely and uniformly for $\Re(s)>\frac{1}{2}$ and $U_{\ell}(s) \neq 0$ for $\Re(s)=1.$
\end{lem}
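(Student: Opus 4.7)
My plan is to verify the factorization by comparing local Euler factors prime by prime, then control the quotient $U_{\ell}(s) := L_{S}(s)/L_{\ell}(s)$ by a second-order argument. Since $\lambda_{f \otimes f \otimes \cdots \otimes_{\ell} f}$ is multiplicative and the defining sum of $L_{S}(s)$ is supported on square-free integers, one obtains the Euler product
\begin{equation*}
L_{S}(s) \;=\; \prod_{p}\left(1 + \frac{\lambda_{f \otimes f \otimes \cdots \otimes_{\ell} f}(p)}{p^{s}}\right) \;=\; \prod_{p}\left(1 + \frac{\lambda_{f}(p)^{\ell}}{p^{s}}\right),
\end{equation*}
where \eqref{SymLf} was used. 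Expanding the local factor of $L_{\ell}(s)$ as a Dirichlet series in $p^{-s}$ via \eqref{SymPowerLf}, its constant term is $1$ and the coefficient of $p^{-s}$ equals $\sum_{n=0}^{[\ell/2]} c_{n}\, \lambda_{sym^{\ell-2n}f}(p)$, where $c_{n} := \binom{\ell}{n} - \binom{\ell}{n-1}$ (with $c_{\ell/2}$ attached to $\lambda_{sym^{0}f}(p)=1$ when $\ell$ is even).

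The central step is therefore the Chebyshev-type identity
\begin{equation*}
\lambda_{f}(p)^{\ell} \;=\; (\alpha_{f}(p)+\beta_{f}(p))^{\ell} \;=\; \sum_{n=0}^{[\ell/2]} \left[\binom{\ell}{n} - \binom{\ell}{n-1}\right] \lambda_{sym^{\ell-2n}f}(p),
\end{equation*}
with the conventions $\lambda_{sym^{0}f}(p) = 1$ and $\lambda_{sym^{-1}f}(p) := 0$. I would derive it by grouping conjugate pairs $\binom{\ell}{n}(\alpha_{f}(p)^{\ell-n}\beta_{f}(p)^{n} + \alpha_{f}(p)^{n}\beta_{f}(p)^{\ell-n})$ in the binomial expansion of $(\alpha_{f}(p)+\beta_{f}(p))^{\ell}$, using $\alpha_{f}(p)\beta_{f}(p)=1$ from \eqref{PropertyCoefficients} to reduce each such pair to $\alpha_{f}(p)^{\ell-2n} + \beta_{f}(p)^{\ell-2n}$, and then applying the telescoping relation
\begin{equation*}
\alpha_{f}(p)^{m} + \beta_{f}(p)^{m} \;=\; \lambda_{sym^{m}f}(p) - \lambda_{sym^{m-2}f}(p),
\end{equation*}
which follows directly from \eqref{SymPowerLf}. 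For even $\ell$, the middle binomial term contributes a standalone $\binom{\ell}{\ell/2}$ which, after combining with the $n = \ell/2 - 1$ telescoping contribution, produces the coefficient $\binom{\ell}{\ell/2} - \binom{\ell}{\ell/2-1}$ attached to $\lambda_{sym^{0}f}(p) = 1$, accounting for the extra $\zeta(s)$-factor in \eqref{LEven}.

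Consequently, at every prime the local factors of $L_{S}(s)$ and $L_{\ell}(s)$ agree to order $p^{-s}$, so the ratio $U_{\ell}(s) = L_{S}(s)/L_{\ell}(s)$ is an Euler product whose $p$-th factor has the form $1 + O_{\ell}(p^{-2s})$; the implied constants are uniform in $p$ because $|\alpha_{f}(p)| = |\beta_{f}(p)| = 1$ by \eqref{PropertyCoefficients}. Summing $\log(1 + O_{\ell}(p^{-2s}))$ over primes then yields absolute and uniform convergence of $U_{\ell}(s)$ on every half-plane $\Re(s) \geq 1/2 + \delta$, together with non-vanishing there, in particular on the line $\Re(s) = 1$. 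The main obstacle is the combinatorial bookkeeping: confirming that the telescoping collapse assembles exactly the prescribed multiplicities $c_{n}$, with careful treatment of the boundary cases $n = [\ell/2]$, which behave differently for odd and even $\ell$.
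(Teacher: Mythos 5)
Your proposal is correct and follows essentially the same route as the paper: write $L_{S}(s)$ as an Euler product with local factor $1+\lambda_{f}(p)^{\ell}p^{-s}$, match the $p^{-s}$-coefficient with that of $L_{\ell}(s)$ via the identity $\lambda_{f}(p)^{\ell}=\sum_{n=0}^{[\ell/2]}\bigl(\binom{\ell}{n}-\binom{\ell}{n-1}\bigr)\lambda_{sym^{\ell-2n}f}(p)$, and conclude that the quotient $U_{\ell}(s)$ has local factors $1+O_{\ell}(p^{-2s})$, hence converges absolutely for $\Re(s)>\tfrac12$. The only difference is that you derive the key identity directly from the binomial expansion of $(\alpha_{f}(p)+\beta_{f}(p))^{\ell}$ together with $\alpha_{f}(p)^{m}+\beta_{f}(p)^{m}=\lambda_{sym^{m}f}(p)-\lambda_{sym^{m-2}f}(p)$, whereas the paper quotes it as the Chebyshev-polynomial identity of Lemma~\ref{ChebpolyLem}; your boundary bookkeeping (including the emergence of the $\zeta(s)$-factor for even $\ell$) is consistent with \eqref{LOdd} and \eqref{LEven}.
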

Similarly, we prove the decomposition of $L_{T}(s)$ in the following Lemma.
\begin{lem}\label{LDecomp1}
Let  $ \ell \in \mathbb{N}.$ we have the following decomposition:
 \begin{equation} \label{DecompositionL1}
\begin{split}
 L_{T}(s) & = L_{2\ell}(s) \times G_{\ell}(s), \\ % \quad \quad {\rm where } \\
 \end{split}
\end{equation}
where $L_{\ell}(s)$ is given in \eqref{LEven} and $G_{\ell}(s)$  is given in  terms of an Euler product which converges absolutely and uniformly for $\Re(s)>\frac{1}{2}$ and $G_{\ell}(s) \neq 0$ for $\Re(s)=1.$
\end{lem}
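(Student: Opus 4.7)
The plan is to reduce Lemma~\ref{LDecomp1} directly to the already-proved Lemma~\ref{LDecomp}, exploiting the fact that the squarefree restriction converts the square of $\lambda_{f \otimes f \otimes \cdots \otimes_{\ell} f}(n)$ into the Fourier coefficient of the $(2\ell)$-fold product $L$-function. Because $2\ell$ is automatically even, the decomposition formula \eqref{LEven} from the previous lemma can then be applied with $\ell$ replaced by $2\ell$, and $G_{\ell}(s)$ arises as the corresponding error Euler product $U_{2\ell}(s)$.

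To make the reduction precise, I would first observe that both arithmetical functions $n \mapsto \lambda_{f \otimes f \otimes \cdots \otimes_{\ell} f}^{2}(n)$ and $n \mapsto \lambda_{f \otimes f \otimes \cdots \otimes_{2\ell} f}(n)$ are multiplicative. At any prime $p$, relation \eqref{SymLf} immediately gives
\[
\lambda_{f \otimes f \otimes \cdots \otimes_{\ell} f}^{2}(p) = \bigl(\lambda_{f}(p)^{\ell}\bigr)^{2} = \lambda_{f}(p)^{2\ell} = \lambda_{f \otimes f \otimes \cdots \otimes_{2\ell} f}(p).
\]
By multiplicativity, this pointwise equality propagates to every squarefree integer $n = p_{1} p_{2} \cdots p_{k}$, and hence
\[
L_{T}(s) \;=\; \sideset{}{^{\flat }}\sum_{n \ge 1} \frac{\lambda_{f \otimes f \otimes \cdots \otimes_{2\ell} f}(n)}{n^{s}},
\]
which is exactly the series $L_{S}(s)$ from \eqref{R-L-function1} with parameter $2\ell$ in place of $\ell$.

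Since $2\ell$ is even, applying Lemma~\ref{LDecomp} to this series gives
\[
L_{T}(s) \;=\; L_{2\ell}(s) \times U_{2\ell}(s),
\]
where $L_{2\ell}(s)$ is the Euler product specified by \eqref{LEven} (with $\ell$ replaced by $2\ell$) and $U_{2\ell}(s)$ is an Euler product that converges absolutely and uniformly in $\Re(s) > \tfrac{1}{2}$ and does not vanish on $\Re(s) = 1$. Setting $G_{\ell}(s) := U_{2\ell}(s)$ yields the desired factorisation, together with the claimed analytic properties.

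There is, in fact, no substantial new obstacle: the only content in the argument is the passage from the local identity $\lambda_{f \otimes f \otimes \cdots \otimes_{\ell} f}^{2}(p) = \lambda_{f \otimes f \otimes \cdots \otimes_{2\ell} f}(p)$ to the global squarefree identity, and this is immediate from multiplicativity. All the analytic structure (convergence in the right half-plane $\Re(s) > \tfrac{1}{2}$ and non-vanishing on the line $\Re(s) = 1$) is inherited directly from the corresponding properties of $U_{2\ell}(s)$ proved in Lemma~\ref{LDecomp}.
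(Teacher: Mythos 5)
Your proposal is correct and follows essentially the same route as the paper: both reduce $L_T(s)$ to the Euler product $\prod_p\bigl(1+\lambda_f(p)^{2\ell}p^{-s}\bigr)$, which is exactly the series treated in Lemma~\ref{LDecomp} with $2\ell$ in place of $\ell$, and then invoke the (even-case) decomposition \eqref{LEven} with $G_\ell(s)=U_{2\ell}(s)$. Your phrasing as a direct reduction to Lemma~\ref{LDecomp} is slightly cleaner than the paper's ``repeat the argument'' formulation, but the mathematical content is identical.
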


Before proving \lemref{LDecomp} and \lemref{LDecomp1}, we state the following result which explicitly govern  the proof of  \lemref{LDecomp}.
\begin{lem}\cite[Lemma 2.2]{LalitManish}\label{ChebpolyLem}
Let $\ell \in \mathbb{N}.$ For each $j$ with $0 \le j \le \ell$ and  $j \equiv \ell \pmod 2$, let $A_{\ell,j} := {\ell \choose {\frac{\ell-j}{2}}} - {\ell \choose {\frac{\ell-j}{2}-1}}$, and  $T_{m}(2x):= U_{m}(x)$ where $U_{m}(x)$ is the $m^{\rm th}$ Chebyshev polynomial of second kind.
%\begin{equation*}
%5\begin{split}
%A_{\ell,j} & := 
%%\begin{cases}
%{\ell \choose {\frac{\ell-j}{2}}} - {\ell \choose {\frac{\ell-j}{2}-1}}. % \quad {\rm if} \quad j \equiv \ell \pmod 2.\\
%%\end{cases}
%\end{split}
%\end{equation*}
Then 
\begin{equation*}
\begin{split}
x^{\ell} &= \sum_{j=0}^{\ell} A_{\ell,j} T_{\ell-j}(x). \\
\end{split}
\end{equation*}
% where $T_{m}(x)= U_{m}(2x)$ and $U_{m}(x)$ is the $m^{\rm th}$ Chebyshev polynomial of second kind.
\end{lem}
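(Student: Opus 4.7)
The plan is to prove this identity by passing to the trigonometric parametrization. Setting $x = 2\cos\theta$, the convention $T_m(2y)=U_m(y)$ together with the classical identity $U_m(\cos\theta)=\sin((m+1)\theta)/\sin\theta$ gives $T_m(x)=\sin((m+1)\theta)/\sin\theta$. Clearing $\sin\theta$ from both sides, the desired identity reduces to
\[
(2\cos\theta)^{\ell}\sin\theta \;=\; \sum_{j} A_{\ell,j}\,\sin\bigl((\ell-j+1)\theta\bigr),
\]
so it suffices to expand the left-hand side as a finite combination of sines and read off the coefficients.

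To carry out the expansion I would apply the binomial theorem to $(e^{i\theta}+e^{-i\theta})^{\ell}$ and then multiply by $\sin\theta = (e^{i\theta}-e^{-i\theta})/(2i)$, obtaining
\[
(2\cos\theta)^{\ell}\sin\theta \;=\; \frac{1}{2i}\sum_{k=0}^{\ell}\binom{\ell}{k}\!\left(e^{i(\ell-2k+1)\theta}-e^{i(\ell-2k-1)\theta}\right).
\]
Collecting the coefficient of $e^{iM\theta}$ for each integer $M$ and pairing $M$ with $-M$ (using the symmetry $\binom{\ell}{k}=\binom{\ell}{\ell-k}$) rewrites the left-hand side as $\sum_{M>0} c_M\sin(M\theta)$, where each $c_M$ is a difference of two binomial coefficients of $\ell$. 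Matching this against the right-hand side term by term via the correspondence $M=\ell-j+1$ identifies $A_{\ell,j}$ with the stated binomial-difference formula, under the convention $\binom{\ell}{k}=0$ for $k<0$.

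A more streamlined alternative, which I would likely favour in a write-up, is induction on $\ell$ using the three-term recurrence $T_{n+1}(x)=x\,T_n(x)-T_{n-1}(x)$ (with $T_{-1}\equiv 0$). Writing $x^{\ell+1}=x\cdot x^{\ell}$ and inserting the induction hypothesis, the recurrence shows that the coefficient of $T_m$ in $x^{\ell+1}$ is the sum of the $T_{m-1}$- and $T_{m+1}$-coefficients in $x^{\ell}$. The identity then boils down to the single binomial check
\[
\binom{\ell+1}{k}-\binom{\ell+1}{k-1} \;=\; \Bigl[\binom{\ell}{k}-\binom{\ell}{k-1}\Bigr] + \Bigl[\binom{\ell}{k-1}-\binom{\ell}{k-2}\Bigr],
\]
which is immediate from Pascal's rule.

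The principal obstacle is essentially clerical: one must track carefully the parity constraint on the index so that all binomial arguments are integers in the legitimate range, and handle the boundary terms where the extended convention $\binom{\ell}{-1}=0$ (or an out-of-range upper index) is invoked. Beyond this bookkeeping, no deeper ingredient is needed: only the binomial theorem, Pascal's identity, and the explicit description of $U_m(\cos\theta)$ enter the argument.
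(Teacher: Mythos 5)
The paper itself supplies no proof of this lemma --- it is quoted verbatim from \cite{LalitManish} --- so there is no in-paper argument to compare against; both of your routes (expanding $(2\cos\theta)^{\ell}\sin\theta$ into exponentials and matching sine coefficients, or inducting on $\ell$ via $T_{n+1}(x)=x\,T_{n}(x)-T_{n-1}(x)$ together with Pascal's rule) are standard and sound.

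However, your final matching step does not close as written, and the reason is worth making explicit: the statement as printed is off by a reindexing, and your correspondence $M=\ell-j+1$ inherits the error rather than detecting it. Carrying your expansion to the end gives
\[
(2\cos\theta)^{\ell}\sin\theta=\sum_{M>0}\Bigl[\tbinom{\ell}{\frac{\ell+1-M}{2}}-\tbinom{\ell}{\frac{\ell-1-M}{2}}\Bigr]\sin(M\theta),
\]
and under $M=\ell-j+1$ the bracket becomes $\binom{\ell}{j/2}-\binom{\ell}{j/2-1}$, which is not $A_{\ell,j}$ (and has non-integral lower indices whenever $\ell$, hence $j$, is odd). The bracket equals $A_{\ell,j}$ precisely when $M=j+1$, i.e.\ when the right-hand side is $\sum_{j}A_{\ell,j}T_{j}(x)$ rather than $\sum_{j}A_{\ell,j}T_{\ell-j}(x)$. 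A sanity check at $\ell=1$ confirms the printed identity cannot be right: it reads $x=A_{1,1}T_{0}(x)=1$. The identity you should prove --- and the one the paper actually uses in \eqref{FCRel} --- is $x^{\ell}=\sum_{n=0}^{[\ell/2]}\bigl(\binom{\ell}{n}-\binom{\ell}{n-1}\bigr)T_{\ell-2n}(x)$. With that correction both of your arguments go through verbatim; in particular the induction step reduces exactly to the Pascal-rule computation you display.
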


 \subsection{Proof of \lemref{LDecomp}}
 We know that $ \lambda_{f \otimes f \otimes  \cdots \otimes_{\ell} f}(n)$ is a  multiplicative function and  at prime $p$, $ \lambda_{f \otimes f \otimes  \cdots \otimes_{\ell} f}(p) = \lambda_{f}^{\ell}(p)$.   So,  $ L_{S}(s)$ is given in terms of  an Euler product, i.e.,
 \begin{equation*}
\begin{split}
L_{S}(s) =  \displaystyle{\sideset{}{^{\flat }} \sum_{n \ge 1}} ~~ \frac{\lambda_{f \otimes f \otimes  \cdots \otimes_{\ell} f}(p)}{n^{s}}
 =  \displaystyle{\prod_{p} \left(\!1+ \frac{(\lambda_{f}(p))^{\ell}}{p^s} \right)} 
 \end{split}
\end{equation*}
for $\Re(s)>1$. From Deligne's estimate, it is well-known that 
 $$\lambda_{f}(p) = 2\cos \theta \quad  {\rm  and} \quad  \lambda_{f}(p^{\ell}) = T_{\ell} (2 \cos \theta)$$ %=U_{\ell} ( \cos \theta)$$
where  $T_{m}(2x) = U_{m}(x)$ and $U_{m}(x)$ is the $m^{\rm th}$ Chebyshev polynomial of second kind. Hence,  from \lemref{ChebpolyLem}, we get 
\begin{equation}\label{FCRel}
\begin{split}
 \lambda_{f \otimes f \otimes  \cdots \otimes_{\ell} f}(p) = {\lambda_{f}}^{\ell}(p)    = \left(\sum_{n=0}^{[\ell/2]} \left( {\ell \choose n} - {\ell \choose {n-1}} \right) \lambda_{sym_{f}^{\ell-2n}}(p) \right).\\ 
\end{split}
\end{equation}
For $\Re(s) >1$, we define the function $L_{\ell}(s)$ given by
 $$L_{\ell}(s) =  \prod_{n=0}^{[\ell/2]} \left({L(s, sym^{\ell-2n}f)}^{\left({\ell \choose n}- {\ell \choose {n-1}}\right)} \right) .$$
 For any fixed integer $\ell \geq 3$, $L_{\ell}(s)$ is a finite product of powers of symmetric power $L$-functions and hence we can write
 $$ L_{\ell}(s) := \sum_{n=1}^{\infty} \frac{A(n)}{n^s}  $$
 in $\Re(s)>1$ where $A(n)$ is a multiplicative arithmetic function. 
Now we express it in terms of  an Euler product of the form  
 \begin{equation*}
\begin{split}
 \displaystyle{\prod_{p} \left(1+ \frac{A(p)}{p^s} + \frac{A(p^{2})}{p^{2s}} + \cdots \right) }, \quad {\rm \text{and we notice that} } \quad A(p) = -{\lambda_{f}}^{\ell}(p).
 \end{split}
\end{equation*}
Moreover, for each prime $p$,  we define the sequence $B(p) = 0$, for each $r \ge 2$, \linebreak  $B(p^{r}) = A(p^{r}) + A(p^{r-1}) \lambda_{f}^{\ell} (p)$. It is easy to see that $B(n) \ll n^{\epsilon}$ for any $\epsilon$. Associated to this sequence, We define the Euler product for $U_{\ell}(s) $  given by
 \begin{equation*}
\begin{split}
U_{\ell}(s) =  \displaystyle{\prod_{p} \left(1+ \frac{B(p)}{p^s} + \frac{B(p^{2})}{p^{2s}} + \cdots \right) }
 \end{split}
\end{equation*}
with  $B(p^{2}) =  A(p^{2}) - \lambda_{f}^{2\ell}(p)$. Then, it is easy to see that  
 \begin{equation*}
\begin{split}
L_{S}(s) =  L_{\ell}(s)  U_{\ell}(s).
%\displaystyle{\prod_{n=0}^{[r/2]}} \left({L(s-1, sym^{r-2n}f)}^{\left({r \choose n}- {r \choose {n-1}}\right)}  L(s, sym^{r-2n}f \times \chi_{8})^{\left({r \choose n}- {r \choose {n-1}}\right)} \right)  U_{r}(s) \\
\end{split}
\end{equation*}
This completes the proof.

 \subsection{Proof of \lemref{LDecomp1}}
 We know that $ \lambda_{f \otimes f \otimes  \cdots \otimes_{\ell} f}(n)$ is a  multiplicative function and  at prime $p$, $ \lambda_{f \otimes f \otimes  \cdots \otimes_{\ell} f}(p) = \lambda_{f}^{\ell}(p)$.   So,  $ L_{T}(s)$ is given in terms of  an Euler product, i.e.,
 \begin{equation*}
\begin{split}
L_{T}(s) =   \displaystyle{\sideset{}{^{\flat }} \sum_{n \ge 1}} ~~ \frac{\lambda_{f \otimes f \otimes  \cdots \otimes_{\ell} f}^{2}(p)}{n^{s}}
 =  \displaystyle{\prod_{p} \left(\!1+ \frac{(\lambda_{f}(p))^{2\ell}}{p^s} \right)} 
 \end{split}
\end{equation*}
for $\Re(s)>1$. Following the arguments as in the proof of \lemref{LDecomp}, we prove the decomposition of $L_{T}(s)$.

\subsection{Convexity/Sub-convexity bound and integral estimates of the $L$-functions}

\begin{lem}\label{Riemann Zeta } 
Let $\zeta(s)= \displaystyle{\sum_{n\ge 1} \frac{1}{n^{s}}}$ be the Riemann zeta function  Then, for any $\epsilon >0$, we have 
\begin{equation}
\begin{split}
\zeta(\sigma+it) &\ll_{\epsilon} (1+|t|)^{{{\rm max} \left\{ \frac{13}{42}(1-\sigma), 0 \right\} }+\epsilon}\\
%{\rm and }\qquad L(\sigma+it, \chi) &\ll_{\epsilon} (N(1+|t|))^{{{\rm max} \left\{ \frac{1}{3}(1-\sigma), 0 \right\} }+\epsilon} 
\end{split}
\end{equation}
uniformly for $\frac{1}{2} \le \sigma \le 1$ and $|t| \ge 1.$  
%The integral estimate for $\zeta(s)$ is given by
%\begin{equation}\label{zetaSec}
%\begin{split}
%\int_{0}^{T} \left| \zeta\left(\frac{1}{2}+it , f \right)\right|^{2} dt &\ll_{f,\epsilon} T\log T \\
%\end{split}
%\end{equation} 
%and
%\begin{equation}\label{ZetaFour}
%\begin{split}
%\int_{0}^{T} \left| \zeta\left(\frac{5}{8}+it \right)\right|^{4} dt &\ll_{f,\epsilon} T^{1+\epsilon} 
%\end{split}
%\end{equation} 
%\begin{equation}\label{ZetaSix}
%\begin{split}
%\int_{0}^{T} \left| \zeta\left(\frac{5}{7}+it  \right)\right|^{12} dt &\ll_{f,\epsilon} T^{1+\epsilon} 
%\end{split}
%\end{equation} 
%uniformly for $T \ge 1.$
\end{lem}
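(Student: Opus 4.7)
\smallskip

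\textbf{Proof sketch of \lemref{Riemann Zeta }.} The plan is to obtain the stated bound by Phragm\'en--Lindel\"of interpolation between the critical line $\sigma = 1/2$ and the edge $\sigma = 1$ of the critical strip, taking as input the best available sub-convexity estimate on $\Re(s)=1/2$.

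First, I would record the two boundary estimates. On the line $\sigma = 1$, a classical argument (using partial summation and the functional equation, or a direct zero-free region input) gives
\begin{equation*}
\zeta(1+it) \ll_{\epsilon} (1+|t|)^{\epsilon},
\end{equation*}
for any $\epsilon>0$ and $|t|\ge 1$. On the line $\sigma = 1/2$, I would invoke Bourgain's sub-convexity bound
\begin{equation*}
\zeta\!\left(\tfrac{1}{2}+it\right) \ll_{\epsilon} (1+|t|)^{\frac{13}{84}+\epsilon},
\end{equation*}
which is the best currently known exponent on the critical line and is the source of the constant $13/42$ appearing in the statement (note that $\tfrac{13}{84} = \tfrac{13}{42}\cdot(1-\tfrac{1}{2})$).

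Second, I would apply the Phragm\'en--Lindel\"of convexity principle to the function $\zeta(s)$, which is holomorphic in the strip $\tfrac12 \le \Re(s) \le 1$ except for the simple pole at $s=1$, and has polynomial growth in $|t|$ throughout. After excising a small neighbourhood of $s=1$ (which contributes only a bounded amount for $|t|\ge 1$), the maximum modulus principle in the form of Phragm\'en--Lindel\"of gives
\begin{equation*}
\zeta(\sigma+it) \ll_{\epsilon} (1+|t|)^{(1-\sigma)\cdot \frac{13}{42}+\sigma\cdot 0 +\epsilon} \;=\; (1+|t|)^{\frac{13}{42}(1-\sigma)+\epsilon}
\end{equation*}
uniformly for $\tfrac{1}{2}\le \sigma \le 1$, which is the desired bound after absorbing the trivial estimate at $\sigma=1$ into the $\max\{\cdot,0\}$.

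The only real obstacle is qualitative rather than technical: one has to quote, rather than reprove, Bourgain's sub-convexity estimate, since any weaker input (e.g.\ Weyl's $\tfrac{1}{6}$ or Huxley's $\tfrac{32}{205}$) would replace the exponent $\tfrac{13}{42}$ by something larger and weaken the final error terms in \thmref{Approx1} and \thmref{Approx2}. Once the exponent $13/84$ is in hand, the interpolation step is entirely routine, and uniformity in $t$ is automatic from Phragm\'en--Lindel\"of applied on the horizontal strip.
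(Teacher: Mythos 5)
Your proof is correct and follows exactly the route the paper takes: the paper's own argument is the one-line observation that the bound follows from Bourgain's exponent $13/84$ on the critical line (his Theorem~5) together with the Phragm\'en--Lindel\"of convexity principle, which is precisely your interpolation between $\sigma=\tfrac12$ and $\sigma=1$. Your additional remarks on handling the pole at $s=1$ and on the necessity of Bourgain's input (rather than Weyl's or Huxley's) are accurate and only make explicit what the paper leaves implicit.
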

\noindent
\textbf{proof:}
The result follows from \cite[Theorem  5]{Bourgain} and Phragmen-Lindel\"{o}f convexity principle. 
%\begin{proof}
% An estimate in \cite[Theorem 8.4]{Ivic}  and  \cite[Theorem  5]{Bourgain} gives the required result.
% \end{proof}
 
\begin{lem}\label{Modular L function }
For any $\epsilon >0$, the sub-convexity bound of Hecke $L$-function is given by:
\begin{equation}
\begin{split}
L(\sigma+it, f) &\ll_{f,\epsilon} (1+|t|)^{{{\rm max} \left\{\frac{2}{3}(1-\sigma), 0\right\} }+\epsilon}
\end{split}
\end{equation}
uniformly for $\frac{1}{2} \le \sigma \le 1$ and $|t| \ge 1,$ and the integral moment of Hecke $L$-function is given by:
%\begin{equation}\label{Secondmoment}
%\begin{split}
%\int_{0}^{T} \left| L\left(\frac{1}{2}+it , f \right)\right|^{2} dt &\ll_{f,\epsilon} T\log T \\
%\end{split}
%\end{equation} 
%and
\begin{equation}\label{Fourthmoment}
\begin{split}
\int_{0}^{T} \left| L\left(\frac{5}{8}+it , f \right)\right|^{4} dt &\ll_{f,\epsilon} T^{1+\epsilon} 
\end{split}
\end{equation} 
uniformly for $T \ge 1.$
\end{lem}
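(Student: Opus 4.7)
The plan is to prove the two assertions separately, each following the same Phragmen-Lindel\"of-interpolation template used a moment earlier for $\zeta(s)$: secure a sharp bound on the critical line $\sigma = 1/2$, note a cheap bound on $\sigma = 1$, and interpolate across the strip.

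For the point-wise sub-convexity bound, I would start from the classical sub-convexity estimate of Good for the Hecke $L$-function attached to a holomorphic cusp form on $SL_2(\mathbb{Z})$, namely $L(\tfrac{1}{2}+it, f) \ll_{f,\epsilon} (1+|t|)^{1/3+\epsilon}$. Combined with the standard bound $L(1+it, f) \ll_f (\log(2+|t|))^{c}$ (an immediate consequence of the Euler product together with the prime number theorem for $L(s,f)$ and partial summation), these two endpoint estimates have $t$-exponents $1/3$ and $0$ respectively. Since $L(s,f)$ is entire and of finite order, applying the Phragmen-Lindel\"of convexity principle in the vertical strip $1/2 \le \sigma \le 1$ produces an exponent that is linear in $\sigma$, interpolating to $\tfrac{1}{3} \cdot \tfrac{1-\sigma}{1/2} + \epsilon = \tfrac{2}{3}(1-\sigma) + \epsilon$, which is precisely the claimed bound.

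For the fourth moment estimate at $\sigma = 5/8$, the plan is identical but applied to the sub-harmonic function $\log \int_0^{T} |L(\sigma+it, f)|^4\, dt$ viewed as a function of $\sigma$. On the critical line, Good's fourth moment theorem yields $\int_0^{T} |L(\tfrac{1}{2}+it, f)|^4\, dt \ll_{f,\epsilon} T^{1+\epsilon}$, while on $\sigma = 1$ the point-wise bound $|L(1+it, f)| \ll_f (\log(2+|t|))^{c}$ gives $\int_0^{T} |L(1+it, f)|^4\, dt \ll_f T (\log T)^{4c}$. Applying the standard convexity theorem for mean values of Dirichlet series (Gabriel/Heath-Brown interpolation), the fourth-moment exponent at any intermediate $\sigma \in [\tfrac{1}{2}, 1]$ remains $1+\epsilon$; specialising to $\sigma = 5/8$ gives the stated estimate.

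The only non-routine ingredients are Good's sub-convexity exponent $1/3$ and his fourth moment bound on the critical line, and these are standard citations from the literature rather than something to rederive. The interpolation steps themselves are essentially mechanical; the one thing to double-check is that the linear interpolation in $\sigma$ between the endpoint exponents $1/3$ and $0$ genuinely reproduces the coefficient $\tfrac{2}{3}$ of $(1-\sigma)$, which it does.
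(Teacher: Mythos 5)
Your treatment of the pointwise bound is correct and is exactly the paper's argument: Good's estimate $L(\tfrac12+it,f)\ll_{f,\epsilon}(1+|t|)^{1/3+\epsilon}$ on the critical line, the logarithmic bound on $\sigma=1$, and Phragmen--Lindel\"of interpolation, which indeed gives the linear exponent $\tfrac13\cdot\tfrac{1-\sigma}{1/2}=\tfrac23(1-\sigma)$.

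The second half of your proposal, however, has a genuine gap: there is no ``Good's fourth moment theorem'' asserting $\int_0^T|L(\tfrac12+it,f)|^4\,dt\ll_{f,\epsilon}T^{1+\epsilon}$. What Good proved on the critical line is the \emph{second} moment asymptotic $\int_0^T|L(\tfrac12+it,f)|^2\,dt\sim cT\log T$ (and, as a corollary, the exponent $1/3$ you use in the first part). The fourth moment of a $GL(2)$ $L$-function in the $t$-aspect on the critical line is equivalent to a second moment of a degree-$4$ $L$-function there, and a bound of the shape $T^{1+\epsilon}$ for it is a well-known open problem; it cannot be fed into a Gabriel/Heath--Brown mean-value convexity argument because the left endpoint input is simply unavailable. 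This is precisely why the abscissa $5/8$ appears in the statement: it is not an arbitrary interior point reached by interpolation from $\sigma=\tfrac12$, but the specific line on which Ivi\'c's argument (which works with the approximate functional equation for $L(s,f)^2$ as a degree-$4$ Dirichlet series, combined with mean-value theorems and Good's pointwise subconvexity bound) closes to give $T^{1+\epsilon}$. The paper's proof of \eqref{Fourthmoment} is accordingly a direct citation of Ivi\'c's Theorem~2 rather than an interpolation; to repair your argument you should replace the critical-line input and the mean-value convexity step by that citation (or reproduce Ivi\'c's argument at $\sigma=5/8$ directly).
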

\begin{proof}
Proof of the sub-convexity bound of Hecke $L$-function follows from standard argument of Phragmen - Lindel\"{o}f convexity principle and a result of A. Good  \cite[Corollary]{AGood}. For the integral estimate, we refer to \cite[Theorem 2]{AIvic}.  
\end{proof}

 \begin{lem}\cite[Corollary 2.1]{Nunes}
 For any arbitrarily small $\epsilon >0$, we have 
\begin{equation}
\begin{split}
L(\sigma+it, sym^{2}f)  &\ll_{f, \epsilon} (1+|t|)^{{{\rm max} \left\{\frac{5}{4}(1-\sigma), 0\right\} }+\epsilon}
\end{split}
\end{equation}
uniformly for $\frac{1}{2} \le \sigma \le 1$ and $|t| \ge 1$.
 \end{lem}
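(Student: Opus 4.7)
The plan is to derive this $t$-aspect subconvexity estimate for $L(s,\text{sym}^{2}f)$ by interpolating, via the Phragm\'en--Lindel\"of convexity principle, between a genuine subconvex bound on the central line $\sigma=1/2$ and a near-trivial bound on the line $\sigma=1$. The statement is essentially Corollary~2.1 of Nunes; the content behind it is the central-line input, with the rest being a standard convexity argument.

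First I would record the boundary input on the $1$-line: $L(1+it,\text{sym}^{2}f)\ll_{f,\epsilon}(1+|t|)^{\epsilon}$. This follows from the Euler product together with the fact that, by Gelbart--Jacquet (and in the modern formulation Newton--Thorne, as quoted in the excerpt), $\text{sym}^{2}f$ is a cuspidal automorphic representation of $GL_{3}(\mathbb{A}_{\mathbb{Q}})$, so $L(s,\text{sym}^{2}f)$ is entire and its logarithm is controlled on the edge of absolute convergence by standard partial summation against the Rankin--Selberg bounds. Second, I would invoke Nunes' central-line subconvexity, which in the $t$-aspect gives
\begin{equation*}
L\!\left(\tfrac{1}{2}+it,\text{sym}^{2}f\right)\ll_{f,\epsilon}(1+|t|)^{5/8+\epsilon}.
\end{equation*}
This is the substantive analytic ingredient, improving on the convexity exponent $3/4$ and requiring the moment technology developed in \emph{loc.\ cit.}

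Third, I would apply Phragm\'en--Lindel\"of to $L(s,\text{sym}^{2}f)$ in the vertical strip $\tfrac{1}{2}\le\sigma\le 1$. The required finite-order growth in the strip comes from the functional equation $\Lambda(s,\text{sym}^{2}f)=\epsilon_{\text{sym}^{2}f}\Lambda(1-s,\text{sym}^{2}f)$ recorded earlier in the excerpt, combined with Stirling on the archimedean factors and the absolute convergence of the Dirichlet series on $\Re(s)=1+\epsilon$. The linear interpolation between the exponent $5/8$ at $\sigma=1/2$ and the exponent $0$ at $\sigma=1$ is
\begin{equation*}
\frac{5}{8}\cdot\frac{1-\sigma}{1-1/2}=\frac{5}{4}(1-\sigma),
\end{equation*}
which is precisely the bound in the lemma once the boundary losses are absorbed into $(1+|t|)^{\epsilon}$.

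The main obstacle is entirely external: the central-line subconvex input is Nunes' theorem and is not reproved here. Beyond quoting it, the only care required is verifying the polynomial growth hypothesis needed for Phragm\'en--Lindel\"of and confirming that $L(s,\text{sym}^{2}f)$ has no singularity in the closed strip $\tfrac{1}{2}\le\sigma\le 1$ that would obstruct the interpolation; both are standard consequences of the cuspidality of $\text{sym}^{2}f$ and the functional equation.
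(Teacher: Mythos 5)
Your proposal is correct, and it fills in what the paper leaves entirely implicit: the paper gives no argument here, simply citing Corollary~2.1 of Nunes for the statement verbatim. Your reconstruction --- taking Nunes' central-line subconvexity $L(\tfrac12+it,\mathrm{sym}^2 f)\ll_{f,\epsilon}(1+|t|)^{5/8+\epsilon}$ as the substantive input, pairing it with the $(1+|t|)^{\epsilon}$ bound on (or just to the right of) the $1$-line, and interpolating by Phragm\'en--Lindel\"of using the entirety of $L(s,\mathrm{sym}^2 f)$ (Gelbart--Jacquet) and the finite-order growth from the functional equation --- is the standard derivation of exactly this strip bound, and the numerics check out: $\tfrac{5}{8}\cdot\frac{1-\sigma}{1/2}=\tfrac{5}{4}(1-\sigma)$.
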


 \begin{lem}\label{General L fun-Con } \cite[pp. 100]{HIwaniec}
Let $L(s,F)$ be an $L$- function of degree $m \ge 2,$ i.e.,
\begin{equation}
\begin{split}
L(s, F) & = \sum_{n \ge 1} \frac{\lambda_{F}(n)}{n^{s}} = \prod_{p-{\rm prime}} \prod_{j= 1}^{m} \left(1-\frac{\alpha_{p, f, j}}{p^s}\right)^{-1},
\end{split}
\end{equation}
where $\alpha_{p, f, j}$, $1 \le j \le m$; are the local parameter of $L( s, F)$ at a prime $p$ and $\lambda_{F}(n) = O(n^{\epsilon})$ for any $\epsilon>0.$ The series and Euler product for $L(s, F)$ converge  absolutely for $\Re(s) > 1$. Suppose  $L(s, F)$ is an entire function except possibly for pole at $s = 1$ of order $r$ and satisfies a nice functional equation $(s \rightarrow 1-s)$. Then for any $\epsilon >0$, we have 
\begin{equation}
\begin{split}
\left( \frac{s-1}{s+1}\right)^{r}L(\sigma+it, F) &\ll_{\epsilon} (\mathcal{Q}_F(1+|t|)^{m})^{\frac{1}{2}(1-\sigma)+\epsilon}
\end{split}
\end{equation}
uniformly for $0 \le \sigma \le 1$,  and $|t| \ge 1$ where $s =\sigma+it$. For $T\ge 1$, We have 
\begin{equation}
\begin{split}
\int_{T}^{2T} \left|L \left(\sigma+it, F\right)\right|^{2} dt &\ll_{\epsilon} (\mathcal{Q}_F(1+|t|)^{m})^{(1-\sigma)+\epsilon}
\end{split}
\end{equation}
uniformly for $\frac{1}{2} \le \sigma \le 1$ where $\mathcal{Q}_F$ is the analytic conductor of $F$. 
\end{lem}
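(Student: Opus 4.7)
Both assertions are classical consequences of the Phragm\'en--Lindel\"of principle combined with the functional equation and an approximate functional equation, and I would follow the standard template in Iwaniec--Kowalski. Set $\Lambda(s,F) := L_\infty(s,F) L(s,F)$, where $L_\infty$ is the Archimedean factor encoded in the analytic conductor $\mathcal{Q}_F$, so that $\Lambda(s,F)$ admits a functional equation of the shape $\Lambda(s,F) = \varepsilon_F \widetilde{\Lambda}(1-s,F)$ with $|\varepsilon_F|=1$ and $\widetilde{F}$ dual to $F$. The normalising factor $\left(\frac{s-1}{s+1}\right)^r$ kills the possible pole at $s=1$ and makes the modified function entire of order one.

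For the convexity bound I would argue on two vertical lines. On $\Re(s)=1+\epsilon$ the Dirichlet series converges absolutely and $L(s,F)\ll 1$, so $\left(\frac{s-1}{s+1}\right)^r L(s,F)\ll 1$. On $\Re(s)=-\epsilon$ the functional equation gives
\begin{equation*}
L(s,F) = \varepsilon_F \, \frac{L_\infty(1-s,\widetilde{F})}{L_\infty(s,F)} \, L(1-s,\widetilde{F}),
\end{equation*}
and Stirling's formula applied to the gamma ratio produces a factor of size $(\mathcal{Q}_F(1+|t|)^m)^{1/2-\sigma+o(1)}$, while $L(1-s,\widetilde{F})\ll 1$ on that line by absolute convergence for $\widetilde F$. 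Multiplying through, the normalised function is bounded by $(\mathcal{Q}_F(1+|t|)^m)^{1/2+\epsilon}$ on $\Re(s)=-\epsilon$. An application of Phragm\'en--Lindel\"of in the strip $-\epsilon \le \sigma \le 1+\epsilon$ (the finite order hypothesis is harmless, since $L(s,F)$ has polynomial growth on vertical lines by convexity for Dirichlet series of finite degree) interpolates linearly in $\sigma$ and yields the stated bound; letting $\epsilon\to 0$ recovers the range $0\le\sigma\le 1$.

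For the second moment bound, the natural tool is the approximate functional equation
\begin{equation*}
L(s,F) \;=\; \sum_{n\le X} \frac{\lambda_F(n)}{n^s}\, V_s\!\!\left(\tfrac{n}{X}\right) + (\text{dual sum involving } \widetilde F) + (\text{negligible remainder}),
\end{equation*}
with cutoff function $V_s$ of rapid decay and the balanced choice $X \asymp (\mathcal{Q}_F(1+|t|)^m)^{1/2}$. For $t\in[T,2T]$ this expresses $L(\sigma+it,F)$ as a Dirichlet polynomial of length $N\asymp(\mathcal{Q}_F T^m)^{1/2}$. Squaring, integrating, and invoking the Montgomery--Vaughan mean value theorem
\begin{equation*}
\int_T^{2T} \Bigl|\sum_{n\le N} \frac{a_n}{n^{\sigma+it}}\Bigr|^2 dt \;\ll\; \sum_{n\le N} \frac{|a_n|^2}{n^{2\sigma}}\,(T+n),
\end{equation*}
with $a_n=\lambda_F(n)V_s(n/X)$, together with the Rankin--Selberg type bound $\sum_{n\le N}|\lambda_F(n)|^2\ll N^{1+\epsilon}$ which follows from $\lambda_F(n)\ll n^\epsilon$, gives the stated bound $(\mathcal{Q}_F(1+|t|)^m)^{(1-\sigma)+\epsilon}$ after elementary bookkeeping.

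The main obstacle is the second moment estimate: writing down a genuine approximate functional equation for a general degree $m$ $L$-function requires some care with the Archimedean factor and with uniformity in $\mathcal{Q}_F$, and one must check that the dual sum contributes at most the same order. Once this machinery is in place, the convexity bound is essentially automatic from Phragm\'en--Lindel\"of, and the second moment bound follows from a direct application of Montgomery--Vaughan. In any case both statements are by now standard and are recorded verbatim in Iwaniec--Kowalski, so I would simply quote the derivation from there rather than rebuild it in detail.
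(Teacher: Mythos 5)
Your proposal is correct and matches the paper's treatment: the paper gives no proof of this lemma at all, simply quoting it from Iwaniec--Kowalski (p.~100), and your sketch (Phragm\'en--Lindel\"of between $\Re(s)=1+\epsilon$ and $\Re(s)=-\epsilon$ via the functional equation and Stirling for the convexity bound, then an approximate functional equation of length $(\mathcal{Q}_F T^m)^{1/2}$ combined with Montgomery--Vaughan for the second moment) is precisely the standard derivation recorded there. Since you also conclude by deferring to that reference, there is nothing to add.
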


\section{Proof of Results}

\subsection{General Philosophy}
  Let $1 \le Y < \frac{X}{2}.$ We introduce a smooth compactly supported function $w(x)$ satisfying: $w(x) =1$ for $x \in [2Y, X],$ $w(x) = 0$ for $x<Y$ and $x> X+Y,$ and $w^{(r)}(x) \ll_{r} Y^{-r}$ for all $r\ge 0.$ Following the idea of \cite{YJGL},   we  sketch the method in \cite{Lalit-M} to get results associated to any arithmetical function $n \mapsto f(n)$. Moreover let $f(n) \ll n^{\epsilon}$ for any arbitrarily small $\epsilon>0$. Then, from \cite[Section 4.1]{Lalit-M}, we get 
 \begin{equation}\label{Mainest}
\sum_{n \le X} f(n) = \underset{s=1}{\rm Res} \left( \frac{X^{s}}{s}\sum_{n\ge 1}\frac{f(n)}{n^{s}} \right) + |V| + O(X^{-A}) + O(Y^{1+\epsilon})
\end{equation}
with (for any fixed $\sigma_{0} \in (1/2, 1)$) 
\begin{equation}\label{CRT1/2}
V=\frac{1}{2 \pi i} \int_{\sigma_{0}-iT}^{\sigma_{0}+iT} \tilde w(s) \left(\sum_{n\ge 1} \frac{f(n)}{n^{s}}\right)  ds,
\end{equation} 
where $\tilde w(s)$ is the the Mellin's transform  of $w(t)$ and   $T= \frac{X^{1+\epsilon}}{Y}$ and $Y$ to be chosen later. $\tilde w(s)$, the Mellin's transform  of $w(t)$, is given by the integral:
$
\tilde w(s) = \int_{0}^{\infty} w(x) x^{s} \frac{dx}{x},
$ 
satisfying 
\begin{equation}\label{FourierW}
\tilde w(s) =  \frac{1}{s(s+1)\cdots(s+m-1)}\int_{0}^{\infty} w^{(m)}(x) x^{s+m-1} dx \ll \frac{Y}{X^{1-\sigma}}  \left(\frac{X}{|s|Y}\right)^{m}
\end{equation}
for any $m\ge 1,$ where $\sigma = \Re(s)$. Hence, it is enough to find an upper  estimate for $V$ and residue of associated $L$-function at $s=1$ to get the required result.

\noindent
\subsection{Proof of \thmref{Approx1}:}

From the Deligne's bound,  we known that $\lambda_{f\otimes f \otimes \cdots \otimes_{\ell} f}(n)  \ll n^{\epsilon}$ for any $\epsilon>0$.  Then,
from equation \eqref{Mainest} with  $f(n) = \lambda_{f\otimes f \otimes \cdots \otimes_{\ell} f}(n) $, we have 
 \begin{equation}\label{EstFN}
 \begin{split}
S_{\ell}(f, X ) & = \sideset{}{^{\flat }} \sum_{n \le X}\lambda_{f\otimes f \otimes \cdots \otimes_{\ell} f}(n) = \underset{s=1}{\rm Res} \left( \frac{X^{s}}{s} L_{S}(s) \right) + |V_{\ell}| + O(X^{-A}) + O(Y^{1+\epsilon}) \\
{\rm with} \quad 
V_{\ell} &=\frac{1}{2 \pi i} \int_{\sigma_{0}-iT}^{\sigma_{0}+iT} \tilde w(s)  L_{S}(s)  ds \\
\end{split}
\end{equation}
for any fixed $\sigma_{0} \in (1/2, 1)$. We  substitute the decomposition of $L_{S}(s)$ ($ L_{S}(s) = L_{\ell}(s) \times U_{\ell}(s)$ where $L_{\ell}(s)$ and $ U_{\ell}(s)$ are given in  \lemref{LDecomp}) in  \eqref{EstFN} ,  and use the absolute convergence of $U_{\ell}(s)$ in $\Re(s)> \sigma_{0}$ and bound for $\tilde w(s)$ from \eqref{FourierW}, to get
\begin{equation*}
\begin{split}
|V_{\ell}| &\ll X^{\sigma_{0}} \int_{-T}^{T}  \frac{|L_{\ell} \left( \sigma_{0} +it \right)|}{|\sigma_{0} +it|}  dt \ll 2X^{\sigma_{0}} \int_{0}^{T}  \frac{|L_{\ell} \left( \sigma_{0} +it \right)|}{| \frac{1}{2}+ \epsilon +it|}  dt \\
& \ll X^{\sigma_{0}} \left\{ \int_{0}^{1} +  \int_{1}^{T} \right\} \frac{|L_{\ell} \left( \sigma_{0} +it \right)|}{|\sigma_{0} +it|}  dt.
\end{split}
\end{equation*}
In first integral, we substitute the respective bound and in second integral, we appeal dyadic division method to get 
\begin{equation}\label{VLEst}
\begin{split}
|V_{\ell}| 
& \ll  X^{\sigma_{0}} + X^{\sigma_{0}} \log T \underset{ 2  \le T_{1} \le  T }{\rm \max} (I_{\ell}(T_{1})) %\quad {\rm where } \quad I_{\ell}(T) =   \frac{1}{T}  \int_{T/2}^{T}  L_{\ell} \left( \sigma_{0} +it \right) dt.  
\end{split}
\end{equation}
\begin{equation}\label{VLEstInt}
\begin{split}
\quad {\rm where } \quad
 I_{\ell}(T) =   \frac{1}{T}  \int_{T/2}^{T}  |L_{\ell} \left( \sigma_{0} +it \right) | dt.  
\end{split}
\end{equation}
Thus,  the estimate for $I_{\ell}(T)$ leads to required estimate for $S_{\ell}(f, X)$.

\smallskip
\noindent
\textbf{Case 1 ($\ell$ is odd):}
\smallskip
We take $\sigma_{0} = 5/8$ and substitute the decomposition of $L_{\ell}(s)$ when $\ell$ is odd from \eqref{LOdd} in  \eqref{VLEstInt} and apply Cauchy-Schwarz inequality  to get 
\begin{equation*}
 \begin{split}
& |I_{\ell}(T)|= \frac{1}{T}  \int_{T/2}^{T}  L_{\ell} \left( \sigma_{0} +it \right) dt.  \ll
\begin{cases}
 \frac{1}{T}  \left( \int_{\frac{T}{2}}^{T}   \left| {L(\sigma_{0} +it, f)}  \right|^{2\left({\ell \choose [\ell/2]}- {\ell \choose {[\ell/2]-1}}\right)}  dt  \right)^{\frac{1}{2}} \\
 \left( \int_{\frac{T}{2}}^{T}  \left| \displaystyle{\prod_{n=0}^{[\ell/2]-1}} L(\sigma_{0} +it, sym^{\ell-2n}f )^{\left({\ell \choose n}- {\ell \choose {n-1}}\right)} \right|^{2}   dt  \right)^{\frac{1}{2}}. \\
  \end{cases} \\
% \end{split}
%\end{equation*}
%\begin{equation*}
% \begin{split}
& |I_{\ell}(T)|  \ll
\begin{cases}
   \frac{1}{T}   \underset{ \frac{T}{2}  \le t \le  T }{\rm sup}  \left( |L(\sigma_{0} +it, f)|^{\left({\ell \choose [\ell/2]}- {\ell \choose {[\ell/2]-1}}-2\right)}  \right)  
   \times \left( \int_{\frac{T}{2}}^{T}   \left| {L(\sigma_{0} +it, f)}  \right|^{4} dt  \right)^{\frac{1}{2}} \\
 \times   \left( \int_{\frac{T}{2}}^{T}  \left| \displaystyle{\prod_{n=0}^{[\ell/2]-1}} L(\sigma_{0} +it, sym^{\ell-2n}f)^{\left({\ell \choose n}- {\ell \choose {n-1}}\right)} \right|^{2}   dt  \right)^{\frac{1}{2}}. \\
  \end{cases}
 \end{split}
\end{equation*}
 We apply the convexity bound/sub-convexity bound and fourth integral moment of Hecke $L$-function and above identity  to get  
\begin{equation*}
 \begin{split}
 |I_{\ell}(T)| 
 &  \ll 
   T^{-1}T ^{\left({\ell \choose [\ell/2]}- {\ell \choose {[\ell/2]-1}}-2\right)\frac{2}{3}(1-\frac{5}{8}) } T^{\frac{1}{2}+\epsilon}  T^{ \left(\frac{3}{2. 8}  \displaystyle{\sum_{n=0}^{[\ell/2]-1}} (\ell-2n+1) {\left({\ell \choose n}- {\ell \choose {n-1}}\right)} \right)} \\
 \end{split}
\end{equation*}
\begin{equation*}
 \begin{split}
 |I_{\ell}(T)| 
 &  \ll 
   T^{-1 + \frac{1}{4} \left({\ell \choose [\ell/2]}- {\ell \choose {[\ell/2]-1}}-2\right) + \frac{1}{2}+\epsilon + \frac{3}{16} \left(  \displaystyle{\sum_{n=0}^{[\ell/2]-1}} (\ell-2n+1) {\left({\ell \choose n}- {\ell \choose {n-1}}\right)} \right)} \\
% \end{split}
%\end{equation*}
%\begin{equation*}
%\begin{split}
% |I_{\ell}(T)| 
 &  \ll 
  {T}^{\left( \frac{1}{4([\frac{\ell}{2}]+2)} {\ell \choose {[\frac{\ell}{2}]}} + \frac{3}{16} \displaystyle{ \left[\sum_{n=0}^{[\ell/2]-1} {\frac{(\ell-2n+1)^{2}}{\ell-n+1} {\ell \choose {n}}} \right]} -1 \right)},
 \end{split}
\end{equation*}
which follows from  the identity $ {\ell \choose {n}}- {\ell \choose {n-1}} = \frac{\ell-2n+1}{\ell-n+1} {\ell \choose {n}}$ when $n >0$ and $1$ when $n=0$. The proof of above identity follows clearly from the definition. We substitute the value of  $|I_{\ell}(T)|$ to get
\begin{equation}\label{VEst}
 \begin{split}
 |V_{l}|  
 &  \ll X^{\frac{5}{8}} +  X^{\frac{5}{8}}
   {T}^{\left( \frac{1}{4([\frac{\ell}{2}]+2)} {\ell \choose {[\frac{\ell}{2}]}} + \frac{3}{16} \displaystyle{ \left[\sum_{n=0}^{[\ell/2]-1} {\frac{(\ell-2n+1)^{2}}{\ell-n+1} {\ell \choose {n}}} \right]} -1 \right)}.
 \end{split}
\end{equation}
We know that the function $L_{S}(s)$ is holomorphic $\Re(s)>1/2$. Thus, substituting  the estimate of $V_{\ell}$ from \eqref{VEst} in \eqref{EstFN},  we have (for odd $\ell$)
 \begin{equation*}
S_{\ell}(f, X )= \sideset{}{^{\flat }} \sum_{n \le X}\lambda_{f\otimes f \otimes \cdots \otimes_{\ell} f}(n) = O\left( X^{\frac{5}{8}} T^{\tilde{\alpha_{\ell}}-1}  \right)   + O(Y^{1+\epsilon}) + O(X^{-A})
\end{equation*}
where $\tilde\alpha_{\ell} = \frac{3}{8}{\left( \frac{2}{3([\frac{\ell}{2}]+2)} {\ell \choose {[\frac{\ell}{2}]}} + \frac{1}{2} \displaystyle{ \left[\sum_{n=0}^{[\ell/2]-1} {\frac{(\ell-2n+1)^{2}}{\ell-n+1} {\ell \choose {n}}} \right]} \right)}.$ We substitute $T= \frac{X^{1+\epsilon}}{Y}$ and  choose $Y = X^{1- \frac{3}{8 \tilde\alpha_{\ell}}+ \epsilon} $ to get 
 \begin{equation*}
S_{\ell}(f, X )=  O\left( X^{1- \frac{1}{\alpha_{\ell}}+ \epsilon} \right)
\end{equation*}
where $\alpha_{\ell} = \frac{8}{3} \tilde \alpha_{\ell}$. This completes the proof when $\ell$ is odd. 

\smallskip
\noindent
{\bf Case 2 ($\ell$ is even):}
 %\begin{equation*}
%\begin{split}
%L_{\ell}(s) & = \zeta(s)^{\left({\ell \choose \ell/2}- {\ell \choose {\ell/2-1}}\right)} L(s, \chi_{D})^{\left({\ell \choose \ell/2}- {\ell \choose {\ell/2-1}}\right)} \\
%  & \qquad \times  \prod_{n=0}^{[\ell/2]-1} \left({L(s, sym^{\ell-2n}f)}^{\left({\ell \choose n}- {\ell \choose {n-1}}\right)}  L(s, sym^{\ell-2n}f \times \chi_{D})^{\left({\ell \choose n}- {\ell \choose {n-1}}\right)} \right)
%\end{split}
%\end{equation*}
We take $\sigma_{0} = \frac{1}{2}+\epsilon$ and substitute the decomposition of $L_{\ell}(s)$ (from \eqref{LEven}) when $\ell $ is even in  \eqref{VLEst} and apply Cauchy-Schwarz inequality  to get 
\begin{equation*}
 \begin{split}
 |I_{\ell}(T)| 
  & \ll \frac{1}{T} \int_{\frac{T}{2}}^{T} |L_{\ell}(\sigma_{0} + i t)| dt \ll \frac{1}{T}  \int_{\frac{T}{2}}^{T}  \left| \prod_{n=0}^{\ell/2}  \left({L(\sigma_{0} +it, sym^{\ell-2n}f)} \right)^{\left({\ell \choose n}- {\ell \choose {n-1}}\right)} \right| dt  \\
& \ll
\begin{cases}
\frac{1}{T}   \underset{ \frac{T}{2}  \le t \le  T}{\rm sup}  \left( |\zeta(\sigma_{0} +it)|^{\left({\ell \choose \ell/2}- {\ell \choose {\ell/2-1}}\right)}  \left| {L(\sigma_{0} +it, sym^{2}f)} \right|^{\left({\ell \choose \ell/2-1}- {\ell \choose {\ell/2-2}}-1\right)} \right) \\ 
 \left(  \int_{\frac{T}{2}}^{T}  \left|{L(\sigma_{0} +it, sym^{2}f)} \right|^{2}  dt  \right)^{\frac{1}{2}} 
  \left(  \int_{\frac{T}{2}}^{T}  \left| \displaystyle{ \prod_{n=0}^{\ell/2-2}} {L(\sigma_{0} +it, sym^{\ell-2n}f)}^{\left({\ell \choose n}- {\ell \choose {n-1}}\right)} \right|^{2}  dt  \right)^{\frac{1}{2}}. \\
  \end{cases}
 \end{split}
\end{equation*}
Using the appropriate sub-convexity bound for $\zeta(s)$ and $L(s, sym^{2}f)$ and integral estimate for general $L$-functions, we have
\begin{equation*}
 \begin{split}
 |I_{\ell}(T)| 
 &  \ll 
  {T}^{\left( \frac{13}{42 (\ell+2)} {\ell \choose {\frac{\ell}{2}}} +  \frac{15}{4(\ell+4)} {\ell \choose {\frac{\ell}{2}}-1} + \frac{1}{4} \displaystyle{ \left[\sum_{n=0}^{\frac{\ell}{2}-2} {\frac{(\ell-2n+1)^{2}}{\ell-n+1} {\ell \choose {n}}} \right]} -\frac{7}{8} \right)}.
 \end{split}
\end{equation*}
We substitute the value of  $|I_{\ell}(T)|$ to get
\begin{equation}\label{VEstEven}
 \begin{split}
 |V_{\ell}|  
 &  \ll X^{\frac{1}{2}+\epsilon} + X^{\frac{1}{2}+\epsilon}
    {T}^{\left( \frac{13}{42 (\ell+2)} {\ell \choose {\frac{\ell}{2}}} +  \frac{15}{4(\ell+4)} {\ell \choose {\frac{\ell}{2}}-1} + \frac{1}{4} \displaystyle{ \left[\sum_{n=0}^{\frac{\ell}{2}-2} {\frac{(\ell-2n+1)^{2}}{\ell-n+1} {\ell \choose {n}}} \right]} -\frac{7}{8} \right)}.
 \end{split}
\end{equation}
We know that the function $L_{S}(s)$ has a pole at $s=1$, of order  $ \frac{2}{ (\ell+2)} {\ell \choose {\frac{\ell}{2}}}-1$. Thus, substituting  the estimate of $V_{\ell}$ from \eqref{VEstEven} in \eqref{EstFN},  we have (for even $\ell$)
 \begin{equation*}
 \begin{split}
S_{\ell}(f, X ) & =  X P_{\ell}(\log X) + O\left( X^{\frac{1}{2}+\epsilon}  \right) + O\left( X^{\frac{1}{2}+\epsilon}
   {T}^{\left(\gamma_{\ell} -1 \right)} \right) +  O\left( Y^{1+\epsilon}\right)+ 
 O(X^{-A})   
  \end{split}
\end{equation*}
where $\gamma_{\ell} =  \frac{1}{8}+\frac{13}{42 (\ell+2)} {\ell \choose {\frac{\ell}{2}}} +  \frac{15}{4(\ell+4)} {\ell \choose {\frac{\ell}{2}}-1} + \frac{1}{4} \displaystyle{ \left[\sum_{n=0}^{\frac{\ell}{2}-2} {\frac{(\ell-2n+1)^{2}}{\ell-n+1} {\ell \choose {n}}} \right]}.$ We substitute  $T= \frac{X^{1+\epsilon}}{Y}$ and  choose $Y = X^{1- \frac{1}{2\gamma_{\ell}}+ \epsilon} $ to get 
 \begin{equation*}
 \begin{split}
S_{\ell}(f,  X ) & =  X P_{\ell}(\log X) + O\left( X^{1- \frac{1}{2 \gamma_{\ell}}+ \epsilon} \right) =  X P_{\ell}(\log X) + O\left( X^{1- \frac{1}{\beta_{\ell}}+ \epsilon} \right)
  \end{split}
\end{equation*}
where $\beta_{\ell} = 2\gamma_{\ell}=  \frac{1}{4}+\frac{13}{21 (\ell+2)} {\ell \choose {\frac{\ell}{2}}} +  \frac{15}{2(\ell+4)} {\ell \choose {\frac{\ell}{2}}-1} + \frac{1}{2} \displaystyle{ \left[\sum_{n=0}^{\frac{\ell}{2}-2} {\frac{(\ell-2n+1)^{2}}{\ell-n+1} {\ell \choose {n}}} \right]}.$
This completes the proof when $\ell$ is even. 

\subsection{Proof of \thmref{Approx2}:} From the \lemref{LDecomp1}, we know that the Dirichlet series $L_{T}(s)$ associated to sum $T_{\ell}(f, X )$ is given by
$ L_{T}(s)  = L_{2\ell}(s) \times G_{\ell}(s) $
where $L_{\ell}(s)$ is given in \eqref{LEven} and $G_{\ell}(s)$  is given in  terms of some Euler product which converges absolutely and uniformly for $\Re(s)>\frac{1}{2}$ and $G_{\ell}(s) \neq 0$ for $\Re(s)=1.$ Hence, following the argument as in the proof of  \thmref{Approx2} (to establish the estimate for $S_{\ell}(f, X )$ when $\ell$ is even), we prove our estimate.

\subsection{Proof of \thmref{N sign change}:}
Let us consider  $h= X^{1- \delta}$ with $  \frac{1}{\beta_{2\ell}} \le \delta < \frac{1}{\alpha_{\ell}}$. Assume that   the sequences $\{\lambda_{f \otimes f \otimes \cdots \otimes_{\ell} f}(n)\}_{n- {\rm squarefree}}$  has a  constant sign (say positive) supported in the interval $ (X,X+X^{1-\delta}]$. Then, using Deligne's bound, i.e.,  $\lambda_{f \otimes f \otimes \cdots \otimes_{\ell} f}(n)  \ll n^{\epsilon}$ for any arbitrary small $\epsilon>0$, we have  
\begin{equation}\label{FCsign}
\begin{split}
 T_{\ell}(f, X+h)- T_{\ell}(f,  X) & = \sideset{}{^{\flat }} \sum_{X \le n \le X+h} \lambda_{f \otimes f \otimes \cdots \otimes_{\ell} f}^{2}(n)  \ll X^{\epsilon}  \sideset{}{^{\flat }} \sum_{X \le n \le X+h}  \lambda_{f \otimes f \otimes \cdots \otimes_{\ell} f}(n) \\
&    = X^{\epsilon} ( S_{\ell}(f,  X+h)- S_{\ell}(f,  X)). \\
&   \ll X^{\epsilon} \left( (X+h)^{1- \frac{1}{\alpha_{\ell}}+ \epsilon} + X^{1- \frac{1}{\alpha_{\ell}}+ \epsilon} \right) \ll  X^{1- \frac{1}{\alpha_{\ell}}+ \epsilon},
\end{split}
\end{equation}
which follows from \eqref{EstT}. Moreover,  from equation \eqref{EstS}, we have
\begin{align}\label{FCCsign}
%\begin{split}
  T_{\ell}(f, X+h)- T_{\ell}(f,  X) &  = (X+h) P_{\ell}(\log(X+h))  - X P_{\ell}(\log X) + O\left( X^{1- \frac{1}{2\beta_{\ell}}+ \epsilon} \right)\nonumber  \\
& \ge (X+h) P_{\ell}(\log X)   - X P_{\ell}(\log X) + O\left( X^{1- \frac{1}{\beta_{2\ell}}+ \epsilon} \right) \\
&= h P_{\ell}(\log X)+ O\left( X^{1- \frac{1}{\beta_{2\ell}}+ \epsilon} \right) \gg   X^{1-\delta}. \nonumber 
%\end{split}
\end{align}
Thus, comparing the estimates given in the  \eqref{FCsign} and \eqref{FCCsign},  we   arrive at a contradiction. Thus, we have a sign change of the sequence $\{\lambda_{f \otimes f \otimes \cdots \otimes_{\ell} f}(n)\}_{n- {\rm squarefree}}$ in the interval  $(X,X+X^{1-\delta}]$.  Moreover we have (at least)  $X^{\delta}$  sign changes in the interval $[X, 2X]$.
 
\bigskip
\noindent
\textbf{Acknowledgement :} The  authors would like to thank IMSc, Chennai for its warm hospitality and wonderful academic atmosphere. The author (1) is thankful to IMSc, Chennai for its generous support during his visit in summer 2023.% The author would like to thank the referee for meticulously going through the manuscript and making numerous suggestions which improved the manifold of the article.

\end{document}